\documentclass{amsart}
\usepackage{standalone}
\usepackage{graphicx}
\usepackage{amssymb, amsfonts, amsmath, amsthm}
\usepackage[all]{xy}
\usepackage{amsmath}
\usepackage{verbatim}
\usepackage[switch, modulo]{lineno}
\usepackage{hyperref}
\usepackage{tikz-cd}
\DeclareGraphicsRule{.tif}{png}{.png}{`convert #1 `dirname #1`/`basename #1
.tif`.png}

\textwidth = 6.5 in
\textheight = 8.5 in
\oddsidemargin = 0.0 in
\evensidemargin = 0.0 in
\topmargin = -0in
\headheight = 0. in
\headsep = 0.5 in
\parskip = 0.0in
\parindent = 0.25in
\hoffset = 0 in


\newcommand{\abs}[1]{\left| #1 \right|}

\newcommand{\R}{\mathbb{R}}
\newcommand{\RP}{\mathbb{RP}}

\newcommand{\C}{\mathbb{C}}

\newcommand{\Q}{\mathbb{Q}}

\newcommand{\PGL}{\operatorname{PGL}}
\newcommand{\GL}{\operatorname{GL}}
\newcommand{\SL}{\operatorname{SL}}

\newcommand{\SO}{\operatorname{SO}}
\newcommand{\SU}{\operatorname{SU}}

\newcommand{\Z}{\mathbb{Z}}

\newcommand{\HH}{\mathbb{H}}

\newcommand{\bs}{\backslash}

\newcommand{\hull}{\mathcal{CH}}
\newcommand{\red}[1]{{\color{black} #1}}

\renewcommand{\O}{\mathcal{O}}

\newtheorem{theorem}{Theorem}

\newtheorem{proposition}[theorem]{Proposition}
\newtheorem{corollary}[theorem]{Corollary}
\newtheorem{lemma}[theorem]{Lemma}
\newtheorem{remark}[theorem]{Remark}

\numberwithin{equation}{section}
\numberwithin{theorem}{section}


\begin{document}

\title[Thin subgroups of $\SL(n+1,\R)$]{Constructing thin subgroups of $\SL(n+1,\R)$ via bending}
\author{Samuel Ballas and D.\ D.\  Long}
\email{ballas@math.fsu.edu}
\email{long@math.ucsb.edu}
\date{\today}
\address{Department of Mathematics\\ 
Florida State University\\ Tallahassee, FL 32306, USA}
\address{Department of Mathematics\\
University of California Santa Barbara\\
Santa Barbara, CA 93106, USA}

\maketitle

\begin{abstract}
	In this paper we use techniques from convex projective geometry to produce many new examples of thin subgroups of lattices in special linear groups that are isomorphic to the fundamental groups of finite volume hyperbolic manifolds. More specifically, we show that for a large class of arithmetic lattices in $\SO(n,1)$ it is possible to find infinitely many non-commensurable lattices in $\SL(n+1,\R)$ that contain a thin subgroup isomorphic to a finite index subgroup of the original arithmetic lattice. This class of arithmetic lattices includes all non-cocompact arithmetic lattices as well as all cocompact arithmetic lattices when $n$ is even.

	\end{abstract}

\tableofcontents

Let $G$ be a semi-simple Lie group and let $\Gamma\subset G$ be a lattice. A subgroup $\Delta\subset \Gamma$ is called \emph{a thin group} if $\Delta$ has infinite index in $\Gamma$ and is Zariski dense in $G$. Over the last several years, there has been a great deal of interest in thin subgroups of lattices in a variety of Lie groups \cite{FuchsMeiriSarnakThin,SarnakThinGroups,FuchsThin}. Much of this interest has been motivated by work of Bourgain, Gamburd, and Sarnak \cite{BourGambSarnAffineSeive} related to expanders and ``affine sieves.'' More generally, there is an increasingly strong sense that thin groups have many properties in common with lattices in $G$.

Furthermore, there is evidence that suggests that generic discrete subgroups of lattices are thin and free (see \cite{FuchsThin,FuchsRivinThin}). However, there is also great interest in constructing thin groups that are not free (or even decomposable as free products). For instance the seminal work of Kahn and Markovic \cite{KahnMark} constructs many thin subgroups contained in any cocompact lattice of $\SL(2,\C)$ that are isomorphic to the fundamental group of a closed surface. There are several generalizations of this result that exhibit thin surface groups in a variety of Lie groups. For instance, Cooper and Futer \cite{CooperFuterThin}, and independently Kahn and Wright \cite{KahnWright}, recently proved a similar result for non-compact lattices in $\SL(2,\C)$ and Kahn, Labourie and Mozes \cite{KahnLabourieMozes} proved an analogue for cocompact lattices in a large class of Lie groups.

These results naturally lead to the question of which isomorphism types of groups can occur as thin groups. In this paper we provide a partial answer by showing that in each dimension there are infinitely many finite volume hyperbolic manifolds whose fundamental groups arise as thin subgroups of lattices in special linear groups. Our main result is:

\begin{theorem}\label{mainthm}
	Let $\Gamma$ be a cocompact (resp.\ non-cocompact) arithmetic lattice in $\SO(n,1)$ of orthogonal type  then there are infinitely many non-commensurable cocompact (resp.\ non-cocompact) lattices in $\SL(n+1,\R)$ that each contain a thin subgroup isomorphic to a finite index subgroup of $\Gamma$. 
\end{theorem}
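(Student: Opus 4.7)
The plan is to adapt the classical Johnson--Millson bending construction to convex projective geometry, bending the inclusion $\Gamma\hookrightarrow\SO(n,1)\subset\SL(n+1,\R)$ along a codimension-one totally geodesic suborbifold of $\HH^n/\Gamma$, and then controlling the arithmetic of the resulting representation so that its image lands inside an arithmetic lattice of $\SL(n+1,\R)$. After passing to a finite-index subgroup if needed, one can find such a suborbifold in any arithmetic lattice of orthogonal type by selecting a codimension-one rational subform of the defining quadratic form. Let $\Lambda\subset\Gamma$ be the stabilizer of the corresponding totally geodesic hyperplane in $\HH^n$, so that $\Lambda$ preserves a projective hyperplane $H\subset\RP^n$ together with its pole. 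The centralizer of $\Lambda$ in $\SL(n+1,\R)$ then contains a one-parameter subgroup $\{c_t\}$ of semisimple elements fixing $H$ pointwise and scaling the transverse direction, and the bending construction produces a family $\rho_t\colon\Gamma\to\SL(n+1,\R)$ via amalgamation (or HNN extension) along $\Lambda$, conjugating by $c_t$ on one side of the hypersurface.

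For each such $\rho_t$ I would verify the three defining properties of a thin group. Discreteness and faithfulness come from convex projective geometry: by bending two copies of the Klein model of $\HH^n$ across $H$ and regluing via $c_t$ one builds a $\rho_t$-invariant properly convex domain $\Om_t\subset\RP^n$ on which $\rho_t(\Gamma)$ acts cocompactly or with finite covolume, forcing discreteness. Zariski density in $\SL(n+1,\R)$ for $t\ne 0$ is argued by ruling out intermediate reductive subgroups: since $\rho_t$ no longer preserves any Lorentz form, the Zariski closure must either be all of $\SL(n+1,\R)$ or a short list of candidates that are excluded using generators coming from both sides of the bend. Infinite index in the ambient lattice is a covolume comparison: $\rho_t(\Gamma)$ is supported on an $n$-dimensional convex domain, whereas a lattice in $\SL(n+1,\R)$ fills the associated symmetric space, whose dimension $n(n+3)/2$ grows quadratically in $n$.

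The delicate step is arranging that $\rho_t(\Gamma)$ actually lies inside a genuine arithmetic lattice $\Gamma_t\subset\SL(n+1,\R)$. I would take $H$ to be defined over the totally real field $K$ of definition of $\Gamma$ and choose $c_t$ to have entries in a finite extension $L/K$; after a suitable conjugation, the matrix entries of $\rho_t(\Gamma)$ lie in $\O_L$, and taking the Galois closure produces an arithmetic lattice $\Gamma_t$ of $\SL(n+1,\R)$ containing $\rho_t(\Gamma)$. The cocompact/non-cocompact dichotomy transfers because bending preserves the presence or absence of unipotent elements, and by Godement's criterion this controls cocompactness of $\Gamma_t$. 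To extract infinitely many pairwise non-commensurable $\Gamma_t$, I would vary the algebraic bending parameter through a sequence producing distinct adjoint trace fields.

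The main obstacle, and where I expect the bulk of the work, is precisely this arithmetic step: controlling the field of matrix entries under bending in order to guarantee inclusion in a genuine lattice of the correct cocompactness type, and then distinguishing infinitely many of these lattices up to commensurability despite the fact that all the bent representations share a common copy of (a finite-index subgroup of) $\Gamma$ in their images. The convex-projective discreteness and the Zariski density arguments are by now fairly standard ingredients, but the arithmetic bookkeeping that turns a one-parameter family of deformations into infinitely many non-commensurable arithmetic superlattices is the core of the construction.
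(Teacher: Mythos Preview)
Your overall approach matches the paper's: bend along a totally geodesic hypersurface, invoke convex projective geometry for faithfulness and discreteness, and then arrange the bent image to sit inside an arithmetic lattice of $\SL(n+1,\R)$. You correctly flag the arithmetic step as the core difficulty, but your proposed mechanism for it does not work as stated. Arranging matrix entries in $\O_L$ for a number field $L$ does not put $\rho_t(\Gamma)$ inside a lattice of a single $\SL(n+1,\R)$: by restriction of scalars, $\SL(n+1,\O_L)$ is a lattice in a \emph{product} of special linear groups, one factor per archimedean place of $L$, and projection to a single factor need not be discrete. The paper's key idea is to use a \emph{unitary} group. One constructs a real quadratic extension $L=F(s)$ of the defining field $F$ with $s$ a Salem number (so $L$ has exactly two real places and the remaining places complex), and notes that the diagonal form $J^{\vec\alpha}$ is Hermitian for the Galois involution $\tau$ of $L/F$. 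Then $\SU(J^{\vec\alpha},\O_L,\tau)$ is a lattice in $\SL(n+1,\R)$, cocompact precisely when $F\neq\Q$, and it already contains $\SO(J^{\vec\alpha},\O_F)$. The bending matrix $B_t=\mathrm{diag}(e^{-nt},e^t,\dots,e^t)$ lies in this unitary group exactly when $\tau(e^t)=e^{-t}$, which is achieved by taking $e^t=s^k$. This specific choice of ambient lattice also handles the cocompactness transfer directly (your Godement argument only works in the non-cocompact direction: absence of unipotents in $\rho_t(\Gamma)$ says nothing about the ambient lattice), and varying the Salem number $s$ yields the infinitely many commensurability classes.

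Two smaller remarks. For infinite index the paper argues via property~(T): every lattice in $\SL(n+1,\R)$ has finite virtual abelianization, whereas $\Gamma$ virtually surjects onto $\Z$ via the class dual to the hypersurface; your dimension heuristic also works once made precise through cohomological dimension. For Zariski density the paper splits into cases: in the compact case it uses Benoist's dichotomy for strictly convex divisible domains together with Koecher's classification of symmetric cones, and in the non-compact case it runs a limit-set argument exploiting that the Zariski closure of a type-$1$ cusp group has open orbits on $\RP^n$, forcing the identity component of the Zariski closure to act transitively. Your outline of this step is too vague to assess, and in particular ``no longer preserves a Lorentz form'' is far from sufficient to pin down $\SL(n+1,\R)$ among its closed subgroups.
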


\red{The definition of an arithmetic lattice of orthogonal type is given in Section \ref{lattices_in_son1}.} It turns out that all non-cocompact arithmetic lattices in $\SO(n,1)$ are of orthogonal type (see the introduction of \cite{LiMillsonArithmetic} and \S 6.4 of \cite{WitteMorris}), and so we have the following immediate corollary of Theorem \ref{mainthm}.

\begin{corollary}\label{noncptcor}
	Let $\Gamma$ be a non-cocompact arithmetic lattice in $\SO(n,1)$ then there are infinitely many non-cocompact lattices in $\SL(n+1,\R)$ that contain a thin subgroup isomorphic to a finite index subgroup of $\Gamma$. \end{corollary}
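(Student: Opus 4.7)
Since every non-cocompact arithmetic lattice in $\SO(n,1)$ is of orthogonal type (as noted in the paragraph following Theorem~\ref{mainthm}), the corollary is immediate from the non-cocompact clause of that theorem. The substantive task is Theorem~\ref{mainthm} itself, for which I outline a plan.

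The method, signalled by the title, is \emph{bending} along a codimension-one totally geodesic submanifold. First I would pass from $\Gamma$ to a finite-index subgroup $\Gamma'$ that contains an embedded separating totally geodesic hypersurface group $H \subset \SO(n-1,1)$; this should be possible for orthogonal arithmetic lattices because the defining quadratic form restricts to rational codimension-one subspaces, giving many candidate sublattices $H$, and subgroup-separability (LERF) type results promote these to embedded hypersurfaces in a finite cover. Decomposing $\Gamma' = A \ast_H B$ (or an HNN extension) and composing with the standard inclusion $\SO(n,1) \hookrightarrow \SL(n+1,\R)$, bending is performed by a one-parameter centralizer $C \subset \SL(n+1,\R)$ of $H$: since $H$ stabilizes an $\R^n \subset \R^{n+1}$, $C$ stretches transverse to this hyperplane and $C \not\subset \SO(n,1)$. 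This produces a family of representations $\rho_t \colon \Gamma' \to \SL(n+1,\R)$ that are faithful and discrete for $t$ near $0$, and that preserve a properly convex domain in $\RP^n$ coming from the bent convex projective structure.

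To conclude, I would then (i) verify that $\rho_t(\Gamma')$ is Zariski dense in $\SL(n+1,\R)$ for all but countably many $t$, by showing that generic bending destroys preservation of any quadratic form on $\R^{n+1}$; (ii) select infinitely many \emph{algebraic} parameters $t_k$ so that the matrix entries of $\rho_{t_k}(\Gamma')$ lie in a number field $K_k$, and then invoke Borel--Harish-Chandra to produce an arithmetic lattice $\Lambda_k \supset \rho_{t_k}(\Gamma')$ of the appropriate type; (iii) confirm $[\Lambda_k : \rho_{t_k}(\Gamma')] = \infty$, since the preserved properly convex domain in $\RP^n$ obstructs $\rho_{t_k}(\Gamma')$ from being a lattice in $\SL(n+1,\R)$; and (iv) extract a subsequence with pairwise non-commensurable $\Lambda_k$, distinguished by an arithmetic invariant such as the trace field or the splitting behavior of cusp holonomies. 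The main obstacle I anticipate is step (ii): arranging bending parameters so that arithmeticity, Zariski density, infinite index, and --- in the cocompact case --- cocompactness of $\Lambda_k$ all hold simultaneously. Cocompactness should be controlled by that of $\Gamma$ together with a properness argument for the convex projective structure bent from the hyperbolic one, while non-cocompactness should be preserved because the cusp subgroups of $\Gamma'$ deform predictably under bending along a hypersurface transverse to them.
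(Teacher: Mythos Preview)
Your deduction of the corollary is exactly what the paper does: the corollary is immediate from Theorem~\ref{mainthm} together with the fact (cited from \cite{LiMillsonArithmetic} and \cite[\S6.4]{WitteMorris}) that every non-cocompact arithmetic lattice in $\SO(n,1)$ is of orthogonal type.

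Since you also sketched a plan for Theorem~\ref{mainthm}, a few comparisons are worth making. Your broad strategy---bend along a totally geodesic hypersurface, arrange arithmeticity of the image for special parameters, then verify Zariski density and infinite index---is the paper's strategy, but the execution differs in several places. First, the paper does not ``invoke Borel--Harish-Chandra'' abstractly: it builds the ambient lattice explicitly as $\SU(J^{\vec\alpha},\O_L,\tau)$ for a real quadratic extension $L=F(s)$ with $s$ a Salem unit, and the bending parameters are precisely $t=\log|u|$ for $\tau$-unitary $u\in\O_L$; this is what makes step~(ii) work and simultaneously controls cocompactness of $\Lambda_k$ via whether $F=\Q$. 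Second, the paper proves Zariski density for \emph{every} $t\neq 0$, not just generically: in the cocompact case via Benoist's dichotomy for strictly convex divisible domains, and in the non-cocompact case by arranging a type~1 generalized cusp whose Zariski closure has an open orbit, forcing transitivity on $\RP^n$. Third, your infinite-index argument (``the preserved properly convex domain obstructs being a lattice'') is not the paper's and, as written, is incomplete: Zariski-dense discrete subgroups of $\SL(n+1,\R)$ \emph{can} preserve properly convex domains, so you would need to supply a further invariant (e.g.\ virtual cohomological dimension). The paper instead observes that $\Gamma_t\cong\Gamma$ virtually surjects onto $\Z$ (from the hypersurface), while any lattice in $\SL(n+1,\R)$ has property~(T) and hence finite abelianization. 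Finally, faithfulness and discreteness hold for all $t$, not just small $t$, because $\rho_t$ is the holonomy of a genuine properly convex structure (Theorem~\ref{Marquisbending}).
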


In the cocompact setting, there is another construction of arithmetic lattices in $\SO(n,1)$ using quaternion algebras. However, this construction only works when $n$ is odd (again, see \cite{LiMillsonArithmetic} and \S 6.4 of \cite{WitteMorris}), which implies:

\begin{corollary}\label{cptcor}
	Let $n\geq 3$ be even and let $\Gamma$ be a cocompact arithmetic lattice in $\SO(n,1)$ then there are infinitely many cocompact lattices in $\SL(n+1,\R)$ that contain a thin subgroup isomorphic to a finite index subgroup of $\Gamma$
\end{corollary}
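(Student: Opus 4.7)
The plan is to deduce Corollary \ref{cptcor} as an immediate consequence of the cocompact case of Theorem \ref{mainthm}. The only thing to verify is that the hypothesis of orthogonal type is automatic when $n$ is even; once that is established, the conclusion is precisely what Theorem \ref{mainthm} outputs.

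First I would recall the classification of cocompact arithmetic lattices in $\SO(n,1)$, as summarized in the introduction of \cite{LiMillsonArithmetic} and in \S 6.4 of \cite{WitteMorris}. Up to commensurability, every such lattice is either of orthogonal type (built from an admissible quadratic form over a totally real number field) or arises from a quaternion algebra equipped with a skew-Hermitian form. The exceptional triality construction exists only in dimension $n=7$, which is odd and therefore irrelevant here.

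The only substantive step is to verify that the quaternion algebra construction yields lattices in $\SO(n,1)$ only when $n$ is odd. This is a signature computation for skew-Hermitian forms on free modules over a quaternion division algebra: the resulting real Lie group has a hyperbolic-type factor whose dimension is forced to be odd by the quaternionic structure. I would invoke this statement from the cited references rather than reproduce it here.

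Granting this, for even $n\geq 3$ every cocompact arithmetic lattice $\Gamma\subset\SO(n,1)$ is commensurable with some lattice $\Gamma'$ of orthogonal type. Applying the cocompact case of Theorem \ref{mainthm} to $\Gamma'$ produces infinitely many cocompact lattices $\Lambda_i\subset\SL(n+1,\R)$ together with thin subgroups $\Delta_i\subset\Lambda_i$ each isomorphic to a finite-index subgroup of $\Gamma'$. To transfer the conclusion to $\Gamma$, I would intersect $\Delta_i$ (under the given isomorphism) with a fixed common finite-index subgroup of $\Gamma$ and $\Gamma'$; the resulting subgroup of $\Lambda_i$ is isomorphic to a finite-index subgroup of $\Gamma$ and is still thin, since a finite-index subgroup of a thin subgroup remains Zariski dense and of infinite index. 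The main, and essentially only, step is invoking the classification correctly; the rest is routine bookkeeping and I do not anticipate any serious obstacle.
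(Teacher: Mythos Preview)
Your proposal is correct and follows essentially the same approach as the paper: both deduce the corollary from Theorem~\ref{mainthm} by invoking the classification of arithmetic lattices in $\SO(n,1)$ (from \cite{LiMillsonArithmetic} and \cite[\S6.4]{WitteMorris}) to observe that the quaternionic construction requires $n$ odd, so for even $n$ every cocompact arithmetic lattice is of orthogonal type. Your final bookkeeping step is harmless but unnecessary, since by the paper's definition ``of orthogonal type'' is a commensurability invariant, so $\Gamma$ itself is already of orthogonal type and Theorem~\ref{mainthm} applies to it directly.
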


Our main result generalizes several previous results regarding the existence of thin groups isomorphic to hyperbolic manifolds in low dimensions. For example, there are examples of thin surface groups in both cocompact and non-cocompact lattices in $\SL(3,\R)$ \cite{LongReidThinI,LongReidThinII}. There are further examples of thin subgroups in $\SL(4,\R)$ isomorphic to the fundamental groups of closed hyperbolic 3-manifolds \cite{LongReidThinIII} and others isomorphic to the fundamental groups of finite volume hyperbolic 3-manifolds \cite{BallasLongThin}.
\subsection*{Organization of the paper}
Section \ref{convexprojgeom} provides the necessary background in convex projective geometry. Section \ref{arithmeticlattices} describes the relevant arithmetic lattices in both $\SO(n,1)$ and $\SL(n+1,\R)$. Section \ref{construction} contains the construction of the thin groups in Theorem \ref{mainthm}. Finally, Section \ref{thinness} contains the proof that the examples constructed in Section \ref{construction} are thin.

\subsection*{Acknowledgments} S.B.\  was partially supported by NSF grant DMS 1709097 and D.L.\ was partially supported by NSF grant DMS 20150301. The authors would also like to thank Alan Reid for pointing out that all non-cocompact arithmetic lattices in $\SO(n,1)$ are of orthogonal type, allowing us to weaken the hypothesis in Corollary \ref{noncptcor}. We would also like to thank the anonymous referee for several helpful suggestions that improved the paper.

\section{Convex projective geometry}\label{convexprojgeom}

Let $V=\R^{n+1}$. There is an equivalence relation on the non-zero vectors in $V$ given by $x\sim y$ if there is $\lambda>0$ such that $\lambda x=y$.  The set $S(V)$ of equivalence classes of $\sim$ is called the \emph{projective $n$-sphere}. Alternatively, $S(V)$ can be regarded as the set of rays through the origin in $V$. Sending each equivalence class to the unique representative of length 1 gives an embedding of $S(V)$ into $V$ as the unit $n$-sphere. 

The group $\GL(V)$ acts on $S(V)$, however this action is not faithful. The kernel of this action consists of positive scalar multiples of the identity, $\R^+I$. Furthermore, if $A\in \GL(V)$ then $\abs{\det(A)}^{\frac{-1}{(n+1)}}A$ has determinant $\pm 1$ and as a result we see that there is a faithful action of 
$$\SL^\pm(V)=\{A\in \GL(V)\mid \det(A)=\pm 1\}$$
 on $S(V)$. 

The projective sphere is a 2-fold cover of the more familiar \emph{projective space} $P(V)$ consisting of lines through the origin in $V$. The covering map is given by mapping a ray through the origin to the line through the origin that contains it. There is also a 2-fold covering of Lie groups from $\SL^\pm(V)$ to $\PGL(V)$ that maps an element of $\SL^\pm(V)$ to its scalar class. \red{Note that here the cover $\SL^\pm(V)$ is not connected}
\begin{figure}
\begin{center}
		\includegraphics[width=.6\linewidth]{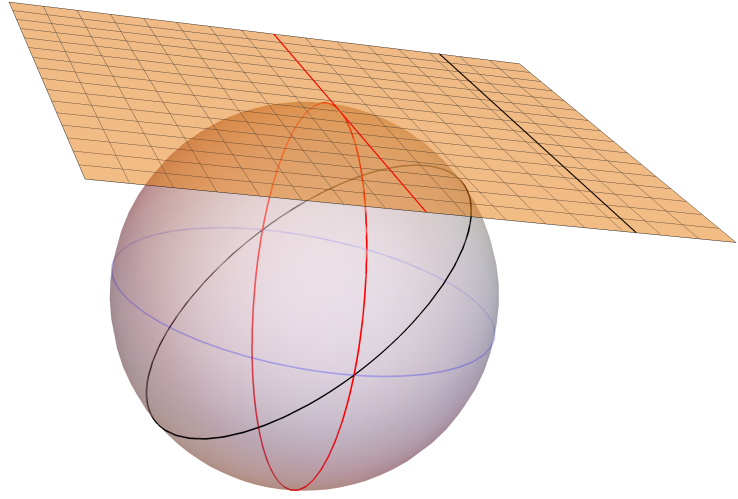}
		\caption{\label{affinepatch}The projection to an affine patch}
\end{center}
\end{figure}

Each (open) hemisphere in $S(V)$ can be identified with $\R^n$ via projection, in such a way that great circles on $S(V)$ are mapped to straight lines in $\R^n$ (see Figure \ref{affinepatch}). For this reason we refer to (open) hemispheres as \emph{affine patches} of $S(V)$ and refer to great circles as \emph{projective lines}. This identification allows us to define a notion of convexity for subsets of an affine patch. A set $\Omega\subset S(V)$ with non-empty interior is called \emph{properly convex} if its closure is a convex subset of some affine patch. \red{If in addition, $\partial \Omega$ contains no non-trivial line segments then $\Omega$ is called \emph{strictly convex}. Since $\Omega$ is convex, each point  $p\in\partial \Omega$ is contained in a hyperplane disjoint from the interior of $\Omega$. If this hyperplane is unique then $p$ is called a \emph{$C^1$ point of $\partial \Omega$.}}

 Each properly convex set $\Omega$ comes equipped with a group 
$$\red{\SL(\Omega)=\{A\in \SL^\pm(V)\mid A(\Omega)=\Omega\}.}$$ 
In other words, $\SL(\Omega)$ consists of elements of $\SL^\pm(V)$ that preserve $\Omega$. There is a similar definition for properly convex subsets of $\RP^n$ and we will allow ourselves to discuss properly convex geometry in whichever setting is more convenient. 

\red{Properly convex sets also come equipped with an $\SL(\Omega)$-invariant metric called the \emph{Hilbert metric}. If $x,y\in \Omega$ then the projective line between $x$ and $y$ intersects $\partial \Omega$ in two points $a$ and $b$ (where $a$ is chosen to be the one closer to $x$). In this context we define the Hilbert distance between $x$ and $y$ to be

$$d_\Omega(x,y)=\frac{1}{2}\log([a:x:y:b]),$$
where $[a:x:y:b]=\frac{\abs{b-x}\abs{y-a}}{\abs{x-a}\abs{b-y}}$ is the cross ratio corresponding to the projective coordinate of $y$ in the coordinate system that takes $a,x$, and $b$ to $0,1$, and $\infty$, respectively. Since projective transformations preserve cross ratios it follows that elements of $\SL(\Omega)$  are $d_\Omega$-isometries. The presence this metric ensures that discrete subgroups of $\SL(\Omega)$ act properly discontinuously on $\Omega$. 
}

To each properly convex $\Omega\subset S(V)$ it is possible to construct a \emph{dual convex set} $\Omega^\ast\subset S(V^\ast)$ defined by 
$$\Omega^\ast=\{[\phi]\in S(V^\ast)\mid \phi(v)>0\ \forall [v]\in \overline{\Omega}\}$$
It is a standard fact that $\Omega^\ast$ is a properly convex subset of $S(V)$. For each $\gamma\in \SL(\Omega)$ there is a corresponding $\gamma^\ast\in \SL(\Omega^\ast)$ given by $\gamma^\ast([\phi])=[\phi\circ\gamma^{-1}]$.  This map induces an isomorphism between $\SL(\Omega)$ and $\SL(\Omega^\ast)$. By choosing a basis for $V$ and the corresponding dual basis for $V^\ast$, it is possible to identify $\SL(V^\ast)$ and $\SL(V)$ and in these coordinates the isomorphism between $\SL(\Omega)$ and $\SL(\Omega^\ast)$ is given by $\gamma\mapsto (\gamma^{-1})^t$.

If $\Omega$ is properly convex and $\Gamma\subset \SL(\Omega)$ is discrete then $\Omega/\Gamma$ is a \emph{properly convex orbifold}. If $\Gamma$ is torsion-free then this orbifold is a manifold. By Selberg's lemma, every properly convex orbifold is finitely covered by a properly convex manifold, and for the remainder of the paper we will almost exclusively be dealing with manifolds. Furthermore, if $\Omega/\Gamma$ is a properly convex manifold then there is a corresponding \emph{dual group} $\Gamma^\ast\subset \SL(\Omega^\ast)$ and a corresponding \emph{dual properly convex manifold} $\Omega^\ast/\Gamma^\ast$. The manifolds $\Omega/\Gamma$ are diffeomorphic, but are in general not projectively equivalent.

An important example of a properly convex set is \emph{hyperbolic $n$-space}, which can be constructed as follows. Let $q$ be the quadratic form on $V$ given by the matrix
\begin{equation}\label{qform}
J_{n}=\begin{pmatrix}
	 I_n & 0 \\
	0 & -1
\end{pmatrix}.
\end{equation}
This form has signature $(n,1)$, and let $\mathcal{C}_q$ be a component of  the cone $\{v\in V\mid q(v)<0\}$. The image of $\mathcal{C}_q$ in $S(V)$ gives a model of hyperbolic space called the \emph{Klein model} of hyperbolic space which we denote $\HH^n$. In this setting, \red{$d_\Omega$ is the standard hyperbolic metric and} $\SL(\HH^n)$ is equal to the group $O(J_n)^+$ of elements of $\SL^\pm(V)$ that preserve both $J_n$ and $\mathcal{C}_q$.  When $\Omega=\HH^n$ and $\Gamma\subset \SL(\HH^n)$ is a discrete, torsion-free group then $\Omega/\Gamma$ is a \emph{complete hyperbolic manifold}. \red{It is a standard fact that if $\Omega/\Gamma$ is a complete hyperbolic manifold then the dual properly convex manifold, $\Omega^\ast/\Gamma^\ast$ is projectively equivalent to $\Omega/\Gamma$, with the projective equivalence being induced by the map from $V$ to $V^\ast$ induced by $q$.} 

If $N$ is an orientable manifold then a \emph{properly convex structure} on $N$ is a pair $(\Omega/\Gamma,f)$ where $\Omega/\Gamma$ is a properly convex manifold and $f:N\to \Omega/\Gamma$ is a diffeomorphism. The map $f$ induces an isomorphism $f_\ast:\pi_1N\to \Gamma$. Since $\Gamma\subset \SL^\pm(V)$ we can regard $f_\ast$ as a representation from $\pi_1N$ into the Lie group $\SL^\pm(V)$ which we call the \emph{holonomy} of the structure $(\Omega/\Gamma,f)$. Since $N$ is orientable it is easy to show that the holonomy always has image in $\SL(V)$. Observe that by definition, the holonomy is an isomorphism between $\pi_1N$ and $\Gamma$, and it follows immediately that the holonomy representation is injective.

Given a properly convex structure $(\Omega/\Gamma,f)$ on $N$ and an element $g\in \SL^\pm(V)$ it is easy to check that $g:\Omega\to g(\Omega)$ induces a diffeomorphism $\overline{g}:\Omega/\Gamma\to g(\Omega)/g\Gamma g^{-1}$ and that $(g(\Omega)/g\Gamma g^{-1},\overline{g}\circ f)$ is also a properly convex structure on $N$. Furthermore, the holonomy of this new structure is obtained by post-composing $f_\ast$ by conjugation  in $\SL^\pm(V)$ by $g$. Two properly convex structures $(\Omega/\Gamma,f)$ and $(\Omega'/\Gamma',f')$ on $N$ are \emph{equivalent} if there is $g\in \SL^\pm(V)$ such that $\Omega'/\Gamma'=g(\Omega)/g\Gamma g^{-1}$, and $f'$ is isotopic to $\overline{g}\circ f$. 

\subsection{Generalized cusps}\label{gencusps}
A generalized cusp is a certain type of properly convex manifold that generalizes a cusp in a finite volume hyperbolic manifold. Specifically, a properly convex \red{$n$}-manifold $C\cong\Omega/\Gamma$ is a \emph{generalized cusp} if $\Gamma$ is a virtually abelian and $C\cong \partial C\times (0,\infty)$ with $\partial C$ a compact strictly convex submanifold of $C$. \red{In this context, $\partial C$ being strictly convex means that for each $p\in \partial C$ there is a projective hyperplane $H_p$ and a neighborhood $U_p$ of $p$ so that $H_p\cap U_p=\{p\}$.}  Such manifolds were recently classified by the first author, D.\ Cooper, and A. Leitner \cite{BCL}. One consequence of this classification is that for \red{$n$-dimensional projective manifolds} there are $n+1$ different \emph{types} of generalized cusps. For the purposes of this work only two of these types (type 0 and type 1) will arise. We will also restrict to cusps with the property that $\partial C$ is diffeomorphic to an $(n-1)$-torus. \red{Such cusps will be called \emph{torus cusps}} and we now briefly describe these types of cusps. 

Let 
$$\Omega_0=\left\{[x_1\dots:x_{n+1}]\in P(V)\mid x_1x_{n+1}>\frac{1}{2}\left(x_2^2+\ldots+x_n^2\right)\right\}.$$ It is not difficult to see that $\Omega_0$ is projectively equivalent to the Klein model for hyperbolic space. Let $P_0$ be the collection (of equivalence classes) of matrices with block form
\begin{equation}\label{type0}
	\begin{pmatrix}
	1 & v & \frac{1}{2}\abs{v}^2\\
	0 & I_{n-1} & v^t\\
	0 & 0 & 1
\end{pmatrix},
\end{equation}

where $v$ is a (row) vector in $\R^{n-1}$, $I_{n-1}$ is the identity matrix and the zeros are blocks of the appropriate size to make \eqref{type0} a $(n+1)\times (n+1)$ matrix. A simple computation shows that the elements of $P_0$ preserve $\Omega_0$ (they are just the parabolic isometries of $\HH^n$ that fix $\infty=[1:0\ldots:0]$). There is a foliation of $\Omega_0$ by strictly convex hypersurfaces of the form 
$$\mathcal{H}_c=\left\{[x_1:\ldots:x_{n}:1]\mid x_1-\frac{1}{2}(x^2_2+\ldots+x_{n}^2)=c\right\},$$
\begin{figure}
\begin{center}
	\includegraphics[scale=.4]{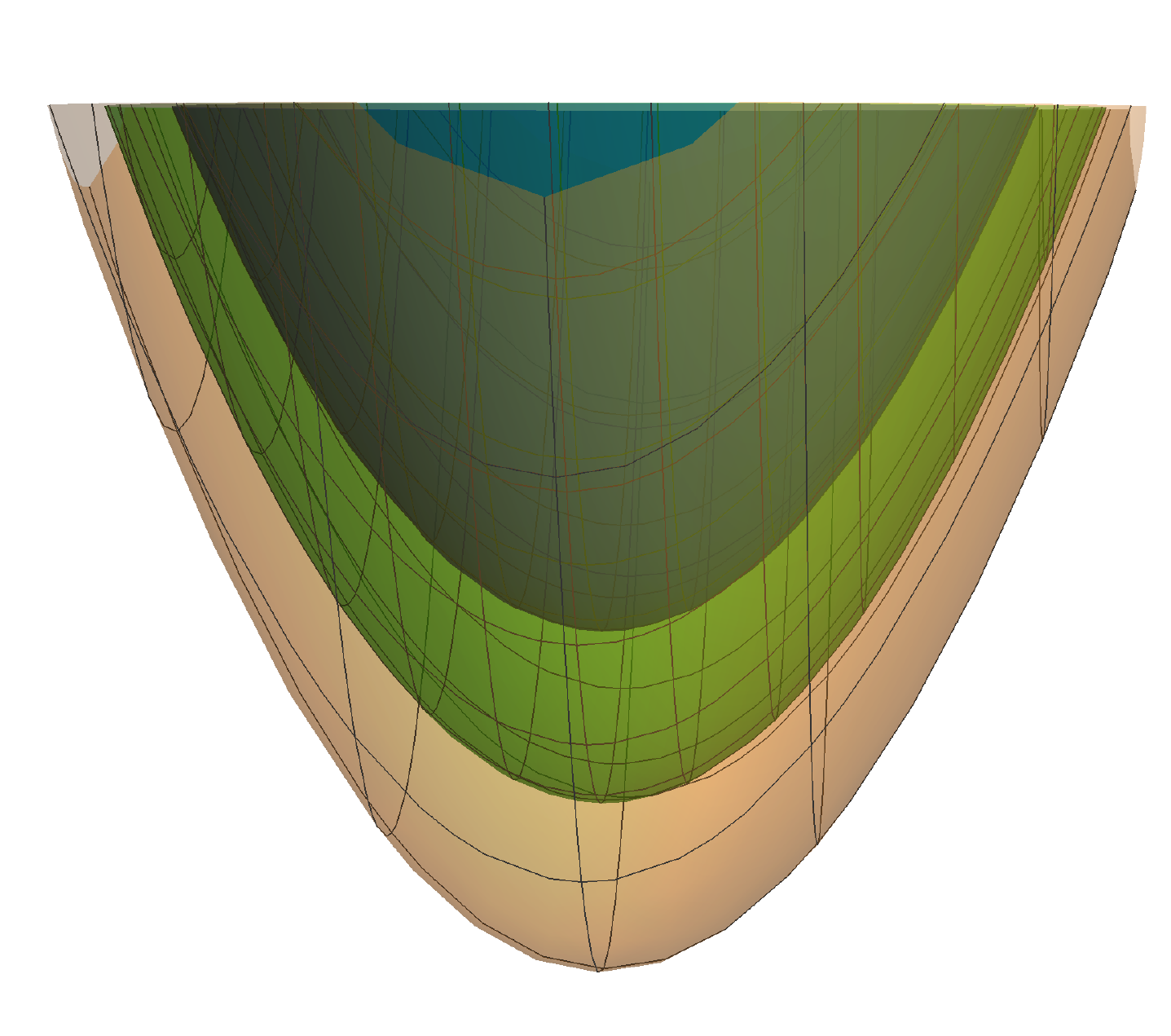}
		\caption{\label{type0cusp} The domain $\Omega_0$ and its foliation by horospheres}
\end{center}
\end{figure}
 for $c>0$ whose leaves are preserved setwise by $P_0$. In terms of hyperbolic geometry the $\mathcal{H}_c$ are  \emph{horospheres} centered at $\infty$ and the convex hull of a leaf is a \emph{horoball} centered at $\infty$. The group $P_0$ is isomorphic to $\R^{n-1}$ and so if $\Gamma\subset P_0$ is a lattice then $\Gamma$ is isomorphic to $\Z^{n-1}$ and the quotient $\Omega/\Gamma$ is a \emph{generalized (torus) cusp of type 0}.

Next, let 
$$\Omega_1=\left\{[x_1:\ldots:x_{n+1}]\mid x_1x_{n+1}>-\log \abs{x_2}+\frac{1}{2}(x_3^2+\ldots+x^2_{n}),\  x_2x_{n+1}>0\right\}$$
 and let $P_1$ be the collection (of equivalence classes) of matrices of block form 
\begin{equation}\label{type1}
\begin{pmatrix}
	1 & 0 & v & -u+\frac{1}{2}\abs{v}^2\\
	0 & e^u & 0 & 0\\
	0 & 0 & I_{n-2} & v^t\\
	0 & 0 & 0 & 1
	\end{pmatrix},
\end{equation}
where $u\in \R$, $v\in \R^{n-2}$, $I_{n-2}$ is the identity matrix and the zeros are the appropriate size to make \eqref{type1} an $(n+1)\times (n+1)$ matrix. Again, it is easy to check that $P_1$ preserves $\Omega_1$. \red{Elements of $P_1$ for which $u=0$ are called \emph{parabolic} and every parabolic element preserves each copy of $\HH^{n-1}$ obtained by intersecting $\Omega_1$ and the plane $x_2=d$ with $d>0$. The domain $\Omega_1$ contains a unique line segment $\ell_\infty$ with endpoints $q_+$ and $q_-$ in its boundary. In the coordinates we have chosen $q_+=[e_1]$ and $q_-=[e_2]$. These points can distinguished by the fact that $q_-$ is a $C^1$ point and $q_+$ is not. The group $P_1$ preserves $\ell_\infty$ and the parabolic elements fix $\ell_\infty$ pointwise}

 Again, there is a foliation of $\Omega_1$ by strictly convex hypersurfaces of the form 
$$\mathcal{H}_c=\left\{[x_1:\ldots:x_n:1]\mid x_1+\log x_2-\frac{1}{2}(x_3^2+\ldots+x_n^2)=c,\ x_2>0\right\}$$
 for $c>0$ that is preserved by $P_1$. Again, each leaf is a $P_1$ orbit, we call the leaves of this foliation \emph{horospheres} and call the convex hulls of a leaves \emph{horoballs}. Again $P_1\cong \R^{n-1}$ and if $\Gamma\subset P_1$ is a lattice then $\Gamma\cong \Z^{n-1}$ and $\Omega_1/\Gamma$ is a \emph{generalized (torus) cusp of type 1}. For the remainder of this paper when we say generalized cusp that will mean a generalized torus cusp of type 0 or type 1.

\begin{figure}
\begin{center}
	\includegraphics[scale=.4]{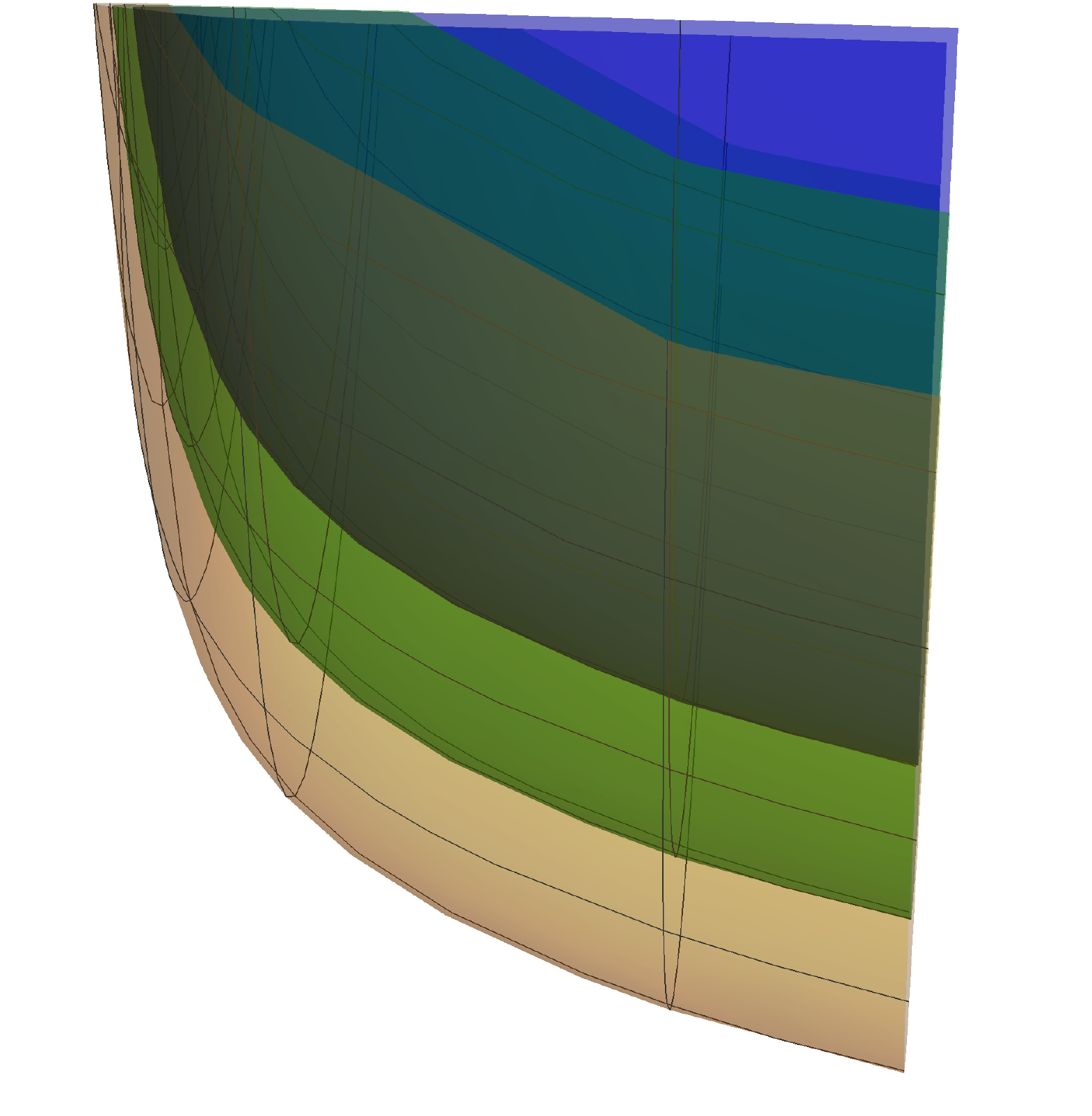}
		\caption{\label{type1cusp} The domain $\Omega_1$ and its foliation by horospheres}
\end{center}
\end{figure}

Generalized cusps of a fixed type are closed under two important operations: taking finite sheeted covers and duality. If $\Omega/\Gamma$ is a generalized cusp then taking a finite sheeted cover corresponds to choosing a finite index subgroup $\Gamma'\subset \Gamma$. The group $\Gamma'$ is also a lattice in $P_0$ or $P_1$ and hence $\Omega/\Gamma'$ is a generalized cusp. The fact that generalized cusps are closed under duality follows immediately from the observation that the group $P_0^t$ (resp.\ $P_1^t$) obtained by taking the transpose of the elements of $P_0$ (resp.\ $P_1$) is conjugate to $P_0$ (resp.\ $P_1$).  

One distinction between these two types of cusps that will be important for our purposes in Section \ref{thinness} is that the group $P_0$ is Zariski closed, but the group $P_1$ is not. The Zariski closure, $\overline{P_1}$ of $P_1$ is $n$-dimensional and consists of matrices of the form
\begin{equation}\label{p1zclosure}
	\begin{pmatrix}
	1 & 0 & v & w\\
	0 & u & 0 & 0\\
	0 & 0 & I_{n-2} & v^t\\
	0 & 0 & 0 & 1
\end{pmatrix},
\end{equation}
where $u\neq 0$, $w\in \R$, and $v\in \R^{n-2}$. Furthermore, we have the following lemma describing the generic orbits of $\overline{P_1}$ whose proof is a straightforward computation.

\begin{lemma}\label{p1barorbits}
	If $x\notin \ker(e_2^\ast)\cup\ker(e_{n+1}^\ast)$ then $\overline{P_1}\cdot x$ is open in $\RP^n$.
\end{lemma}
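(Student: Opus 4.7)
The plan is to compute the orbit directly. Since $\overline{P_1}$ has dimension $n=\dim\RP^n$, it suffices to show that the orbit $\overline{P_1}\cdot x$ has nonempty interior in $\RP^n$ (equivalently, that the orbit map $g\mapsto g\cdot x$ is a submersion at the identity).

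Using the hypothesis $e_{n+1}^*(x)\neq 0$, I would choose the homogeneous representative $x=(x_1,x_2,\ldots,x_n,1)^t$, and note that $x_2\neq 0$ since $e_2^*(x)\neq 0$. Multiplying $x$ by a general element of $\overline{P_1}$ as displayed in \eqref{p1zclosure} gives
\begin{equation*}
\bigl(\,x_1+v\cdot(x_3,\ldots,x_n)^t+w,\ ux_2,\ x_3+v_1,\ \ldots,\ x_n+v_{n-2},\ 1\,\bigr)^t.
\end{equation*}
Varying $v\in\R^{n-2}$ makes the coordinates indexed by $3,\ldots,n$ take any values; once these are fixed, varying $w\in\R$ makes the first coordinate arbitrary; and since $x_2\neq 0$, varying $u\in\R\setminus\{0\}$ lets the second coordinate be any nonzero real. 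Thus
\begin{equation*}
\overline{P_1}\cdot x=\bigl\{\,[a_1:a_2:\cdots:a_n:1]\mid a_2\neq 0\,\bigr\},
\end{equation*}
which is open in the affine patch $\{x_{n+1}\neq 0\}$ and hence open in $\RP^n$.

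There is no serious obstacle here; the argument is a one-step matrix computation, exactly as the authors advertise. The only point to keep track of is the role of each hypothesis: $e_{n+1}^*(x)\neq 0$ permits the normalization fixing the last coordinate and keeps the orbit inside a single affine patch, while $e_2^*(x)\neq 0$ is precisely what allows the one-parameter family $u\in\R\setminus\{0\}$ to sweep the second coordinate through all nonzero reals rather than leaving it pinned at $0$.
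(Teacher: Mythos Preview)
Your proof is correct and is exactly the ``straightforward computation'' the paper alludes to without writing out. The only thing to note is that the paper gives no explicit proof, so there is nothing further to compare.
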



\subsection{Bending}\label{bending}

We now describe a construction that allows one to start with a (special) hyperbolic manifold and produce a family of inequivalent convex projective structures.

Suppose that $M=\HH^n/\Gamma$ is a complete, finite-volume hyperbolic manifold, and suppose that $M$ contains an embedded totally geodesic hypersurface, $\Sigma$. There is an embedding of $\SO(J_{n-1})$ into $\SO(J_n)$ via the embedding

$$\SO(J_{n-1})\mapsto \begin{pmatrix}
 1 & 0\\
 0 & \SO(J_{n-1})	
 \end{pmatrix}.
$$ 
Under this embedding, the image of $\SO(J_{n-1})$ stabilizes a copy of $\HH^{n-1}$ in $\HH^n$ and $\Sigma\cong\HH^{n-1}/\Lambda$, where $\Lambda$ is a subgroup of $\SO(J_{n-1})\cap \Gamma$. For each $t\in \R$, the element

$$B_t=\begin{pmatrix}
	e^{-nt} & \\
	 & e^{t}I_{n}
\end{pmatrix}$$
centralizes $\SO(J_{n-1})$ and hence centralizes $\Lambda$. 

Let $N=M$ and let $id:N\to M$ be the identity, then $(M,id)$ is a convex projective structure on $N$. Let $\rho:\pi_1N\to \SL(V)$ be the holonomy of this structure. Concretely, $\rho$ is just the inclusion of $\pi_1N\cong \Gamma$ into $\SL(V)$. We now define a family, $\rho_t:\pi_1N\to \SL(V)$, of representations such that $\rho_0=\rho$. The construction depends on whether or not $\Sigma$ is separating. 

If $\Sigma$ is separating then $\Gamma$ splits as an amalgamated product $\Gamma_1\ast_{\Lambda}\Gamma_2$, where the $\Gamma_i$ are the fundamental groups of the components of $M\bs \Sigma$. Then $\rho_t$ is defined by the property that $\rho_t(\gamma)=\rho(\gamma)$ if $\gamma\in \Gamma_1$ and $\rho_t(\gamma)=B_t\rho_0(\gamma)B_t^{-1}$ if $\gamma\in \Gamma_2$. Since $B_t$ centralizes $\Lambda$ this gives a well defined representation $\rho_t:\pi_1N\to \SL(V)$. 
	
In the separating case, $\Gamma=\Gamma'\ast_{s}$ is an HNN extension where $\Gamma'$ is the fundamental group of $M\bs \Sigma$. In this case $\rho_t$ is defined by the property that $\rho_t(\gamma)=\rho(\gamma)$ if $\gamma\in \Gamma'$ and $\rho_t(s)=B_t\rho(s)$. Again it is easy to see that since $B_t$ centralizes $\Lambda$ that this gives a well defined representation $\rho_t:\pi_1N\to \SL(V)$.  

In either case we say that the family of $\rho_t$ is \emph{obtained by bending $M$ along $\Sigma$}. From the construction, it is not obvious that the representations $\rho_t$ are the holonomy of a convex projective structure. However, the following theorem  guarantees that this is the case

\begin{theorem}[See \cite{Ko,MarquisBending}]\label{Marquisbending}
	For each $t\in \R$ the representation $\rho_t$ obtained by bending $M$ along $\Sigma$ is the holonomy of a properly convex projective structure on $N$. 
\end{theorem}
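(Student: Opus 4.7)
My plan is to construct the developing map $\dev_t: \HH^n \to S(V)$ and its image $\Omega_t \subset S(V)$ explicitly via a cut-and-paste along the lifts of $\Sigma$, and then verify that $\Omega_t$ is properly convex with $\rho_t(\pi_1 N)$ acting properly discontinuously on it and quotient diffeomorphic to $N$. First, because $\Sigma$ is embedded, totally geodesic, and of codimension one, its preimage $\wt{\Sigma} \subset \HH^n$ is a $\Gamma$-invariant, locally finite collection of pairwise disjoint totally geodesic hyperplanes, and these cut $\HH^n$ into closed convex chambers $\{R_\beta\}$ whose combinatorial dual graph is a tree $T$. I fix a basepoint chamber $R_0$ and a reference hyperplane $H_0 = \{x_1 = 0\} \cap \HH^n$ stabilized by $\Lambda$; every wall of $\wt{\Sigma}$ has the form $\gamma H_0$ for some $\gamma \in \Gamma$.

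Next, I would assign to each oriented edge of $T$ crossing a wall $\gamma H_0$ the element $\rho(\gamma) B_t \rho(\gamma)^{-1} \in \SL(V)$. Since $B_t$ centralizes $\Lambda$ this assignment is independent of the choice of coset representative, and the product along any path in $T$ depends only on its endpoints. Define $g_\beta \in \SL(V)$ to be this product along the path from $R_0$ to $R_\beta$, and set $\dev_t|_{R_\beta} := g_\beta|_{R_\beta}$. A cocycle check shows $\dev_t$ is equivariant with respect to $\rho$ on the source and $\rho_t$ on the target, recovering the bent representation. Adjacent chambers $R_\beta, R_{\beta'}$ sharing a wall $\gamma H_0$ have $g_\beta$ and $g_{\beta'}$ differing by a conjugate of $B_t^{\pm 1}$, which fixes $\gamma H_0$ pointwise in $S(V)$ and preserves each of its sides. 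Hence $\dev_t(R_\beta)$ and $\dev_t(R_{\beta'})$ share a hyperplane face and lie on opposite sides, so $\Omega_t := \dev_t(\HH^n)$ is locally a tree-like union of convex chambers glued across hyperplane walls.

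The main obstacle will be proving that $\Omega_t$ is properly convex globally. Convexity across a single wall reduces to an explicit dihedral-angle computation using the diagonal form of $B_t$: because $B_t$ fixes $H_0$ pointwise and preserves each side of $H_0$ in $S(V)$, the half-spaces adjacent to a bent wall fit together without exceeding a straight dihedral. To pass from local to global convexity, I would use the tree structure of $T$: any two chambers are joined by a finite chain of convexly-glued adjacent chambers, and a tree induction shows that the union of chambers along any finite subtree is convex; taking the directed limit gives global convexity of $\Omega_t$. Proper convexity (i.e., boundedness in an affine patch) requires controlling $\Omega_t$ near infinity; for this I would use the cusp classification of Section \ref{gencusps} to show that each end of $M$ develops onto a generalized cusp of $\Omega_t$ under $\dev_t$, combined with a compactness argument on the thick part. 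Once $\Omega_t$ is known to be properly convex, $\dev_t$ is a $\rho_t$-equivariant diffeomorphism onto $\Omega_t$, which yields the desired properly convex projective structure $(\Omega_t/\rho_t(\pi_1 N), f_t)$ on $N$.
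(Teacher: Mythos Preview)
The paper does not prove this theorem at all: it is stated with a citation to \cite{Ko,MarquisBending} and used as a black box. So there is no ``paper's own proof'' to compare against; you are attempting to reconstruct a result that the authors import from the literature.

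On the merits of your sketch, there are two genuine gaps. First, the step ``for proper convexity I would use the cusp classification of Section~\ref{gencusps}'' is circular. Theorem~\ref{balmarbending} is stated for a properly convex manifold obtained by bending; it presupposes exactly the conclusion you are trying to establish. You cannot invoke the type~0/type~1 classification of the ends of $M_t$ before you know $M_t$ is properly convex. Any argument for proper convexity has to be independent of that section.

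Second, the convexity argument is incomplete in a way that hides the real work. The claim that ``the half-spaces adjacent to a bent wall fit together without exceeding a straight dihedral'' is not a dihedral-angle statement in the hyperbolic sense: $B_t$ moves the boundary sphere $\partial\HH^n$, so the two pieces you are gluing are halves of \emph{different} ellipsoids sharing only the wall, and one must actually check that the image of one half lies in the correct half-space determined by the supporting hyperplane of the other along the wall. This is a computation, not a tautology, and it is where the specific form of $B_t$ (eigenvalue $e^{-nt}$ transverse to the wall, $e^t$ along it) is used. Likewise, the ``tree induction'' passing from convexity across one wall to convexity of an arbitrary finite subtree is not automatic: gluing convex sets along hyperplanes in a tree pattern can fail to be convex if the pieces curl back on themselves, and ruling this out is the substantive content of Marquis's argument (or, in Koszul's approach, of the openness theorem for convex structures). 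Your outline identifies the right combinatorial structure but does not supply the geometric estimate that makes the induction go through.
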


\red{\begin{remark}\label{dual_bending_is_bending}
The property of being obtained from bending is closed under two important operations: taking finite sheeted covers and duality. First, if $M=\Omega/\Gamma$ and $M'$ is a finite sheeted cover of $M$ then $M'$ is of the form $\Omega/\Gamma'$ where $\Gamma'$ is a finite index subgroup of $\Gamma$. If $M$ is obtained by bending a finite volume hyperbolic manifold $N$ along an embedded totally geodesic hypersurface $\Sigma$ then $M'$ is obtained by similtaneously bending the cover $N'$ of $N$ corresponding to $M'$ along the (possibly disjoint) totally geodesic embedded hypersurface $\Sigma'$ obtained by taking the complete preimage of $\Sigma$ in $N'$.

 If $N=\HH^n/\Gamma_0$ is a complete hyperbolic manifold containing a totally geodesic hypersurface $\Sigma$, then its dual projective manifold, $N^\ast$, is projectively equivalent to $N$, and hence also contains a totally geodesic hypersurface, $\Sigma^\ast$. If $M=\Omega/\Gamma$ is obtained from bending $N$ along $\Sigma$, then the dual projective manifold, $M^\ast$ is obtained by bending $N^\ast$ along $\Sigma^\ast$. 
\end{remark}
}

The following theorem from \cite{BalMar} addresses which types of cusps arise when one bends a hyperbolic manifold along a totally geodesic hypersurface.

\begin{theorem}[Cor.\ 5.10 of \cite{BalMar}]\label{balmarbending}
	Let $M$ be a finite volume hyperbolic manifold and let $\Sigma$ be an embedded totally geodesic hypersurface. If $M'$ is the properly convex manifold obtained by bending $M$ along $\Sigma$ then each end of $M$ is a generalized cusp of type 0 or type 1. 
\end{theorem}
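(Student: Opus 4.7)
The plan is to analyze the holonomy of the bent structure on each cusp subgroup of $\pi_1M$ independently and to show that, up to conjugation in $\SL(V)$, it is a lattice in either the type~0 model group $P_0$ or the type~1 model group $P_1$ of Section \ref{gencusps}. Since $M$ is a finite volume hyperbolic $n$-manifold, each end corresponds to a cusp with cusp subgroup $P \subset \Gamma = \pi_1M$, virtually $\Z^{n-1}$, fixing a parabolic point $p \in \partial\HH^n$. I would split into two cases based on whether any lift $\widetilde{\Sigma}$ of $\Sigma$ has $p$ in its closure.

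Case A (no lift of $\Sigma$ limits to $p$): A sufficiently small cusp neighborhood of $p$ is then disjoint from $\Sigma$, so in the graph-of-groups decomposition of $\Gamma$ induced by $\Sigma$ the subgroup $P$ is conjugate into a single vertex group. By the construction of $\rho_t$, its restriction to any vertex group is either $\rho$ itself or the conjugate of $\rho$ by the fixed element $B_t$; in either situation $\rho_t(P)$ is $\SL(V)$-conjugate to $\rho(P)$. The latter is a lattice in a conjugate of the parabolic subgroup of $\SO(J_n)^+$ fixing $p$, which, after identifying $\HH^n$ with $\Omega_0$, is precisely a lattice in $P_0$. Hence this end of $M'$ is a generalized torus cusp of type~0.

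Case B (some lift $\widetilde{\Sigma}$ has $p$ in its closure): Here $p$ is also a parabolic fixed point of a cusp of $\Sigma$. After conjugating $\Gamma$ in $\SO(J_n)^+$ I would arrange that $\widetilde{\Sigma}$ is the canonical copy of $\HH^{n-1}$ stabilized by the embedded $\SO(J_{n-1})$, with $p$ at infinity in $\widetilde{\Sigma}$. In this adapted frame, $P\cap\Lambda$ is a rank $(n-2)$ lattice in the parabolic subgroup of $\SO(J_{n-1})$ fixing $p$, and after passing to a finite index subgroup $P$ splits as $(P\cap\Lambda)\oplus\langle\gamma\rangle$, where $\gamma$ is a parabolic translating transversely to $\widetilde{\Sigma}$. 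Because $B_t$ centralizes $\SO(J_{n-1})$, the bending fixes $\rho(P\cap\Lambda)$, while the transverse generator gets modified to $B_t\rho(\gamma)$ (as an HNN stable letter, or via the amalgamated-product recipe depending on whether $\Sigma$ separates). A direct matrix computation in the adapted basis then shows that $\rho(P\cap\Lambda)$ fills in the $v$-parameter family appearing in \eqref{type1}, while $B_t\rho(\gamma)$ contributes the non-trivial $e^u$ entry; the two together realize $\rho_t(P)$, after a further $\SL(V)$ conjugation, as a lattice in $P_1$. Combined with Theorem \ref{Marquisbending}, which certifies proper convexity of the global structure, this identifies the end as a generalized torus cusp of type~1.

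The principal obstacle is the computation in Case B: choosing a basis of $V$ that simultaneously exhibits the parabolic at $p$ in the $P_0$ normal form, places $\widetilde{\Sigma}$ in the correct hyperplane for the $\SO(J_{n-1})$-embedding, and aligns $\gamma$ with the transverse coordinate, so that left-multiplication by $B_t$ produces exactly the block pattern of \eqref{type1}. Once this normalization is established, the identification of $\rho_t(P)$ with a lattice in $P_1$ follows by matrix multiplication, and the type classification is immediate.
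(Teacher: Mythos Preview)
The paper does not prove this statement at all: it is imported verbatim as Corollary~5.10 of \cite{BalMar} and used as a black box. There is therefore no ``paper's own proof'' to compare your attempt against.

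That said, your sketch is a reasonable outline of how the argument in \cite{BalMar} actually proceeds, with one genuine gap. In Case~B you tacitly assume that the cusp cross-section meets $\Sigma$ in a \emph{single} component, so that after passing to a finite-index subgroup the cusp group splits as $(P\cap\Lambda)\oplus\langle\gamma\rangle$ with one transverse generator. In general a flat $(n-1)$-torus cusp cross-section can intersect the totally geodesic $\Sigma$ in several parallel $(n-2)$-tori, and then the graph-of-groups decomposition of $P$ has several edges, each contributing a bending factor. One must check that these factors, which are conjugates of $B_t$ by parabolic translations, combine to give a group still conjugate into $P_1$ rather than into one of the higher cusp types. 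This is exactly the content of the cited corollary, and the paper itself later takes care (see the end of Section~\ref{lattices_in_son1} and the invocation of \cite[Thm~6.1]{BalMar} in the proof of Proposition~\ref{noncptZariskidense}) to pass to a cover in which at least one cusp has connected intersection with $M_1$, precisely because the connected case is the one giving a clean type~1 identification. Your Case~A is fine as stated.
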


\subsection{Properties of the holonomy}\label{holonomy}

In this section we discuss some important properties of the holonomy representation  of convex projective structures \red{that arise from bending.} A representation $\rho:\Gamma\to \GL(V)$ is called \emph{strongly irreducible} if its restriction to any finite index subgroup is irreducible. The main result of this section is the following:

\begin{theorem}\label{gencupsstrirred}
	Let $(\Omega/\Gamma,f)$ be a convex projective structure on $M$ and let $\rho$ be its holonomy. If $\Omega/\Gamma$ \red{is obtained by bending a finite volume hyperbolic manifold along an embedded totally geodesic hypersurface}
	then $\rho$ is strongly irreducible. 
\end{theorem}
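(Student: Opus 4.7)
The plan is to argue by contradiction, exploiting the fact that the bending does not deform the totally geodesic surface subgroup $\Lambda = \pi_1 \Sigma$. Since the bending element $B_t = \operatorname{diag}(e^{-nt}, e^t I_n)$ centralizes the embedded copy of $\SO(J_{n-1})$, the restriction $\rho|_\Lambda$ coincides with the original inclusion $\Lambda \hookrightarrow \SO(J_{n-1}) \subset \SL(V)$ regardless of the bending parameter, and the same holds on every finite index subgroup of $\Lambda$.

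Let $H \leq \Gamma$ be a finite index subgroup, and suppose for contradiction that $W \subsetneq V$ is a nonzero $\rho(H)$-invariant subspace. Setting $\Lambda_H = \Lambda \cap H$, this is a finite index subgroup of $\Lambda$, hence still a lattice in $\SO(J_{n-1})$ and Zariski dense there by Borel density. Since $\Lambda_H$ preserves the splitting $V = \langle e_1 \rangle \oplus V_1$ with $V_1 = \operatorname{span}(e_2, \ldots, e_{n+1})$, acts trivially on $\langle e_1 \rangle$, and acts via the standard representation of $\SO(n-1,1)$ on $V_1$, the only proper $\Lambda_H$-invariant subspaces of $V$ are $\langle e_1 \rangle$ and $V_1$ (using that $n \geq 3$, so that $\SO(n-1,1)$ acts irreducibly on $\R^n$). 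Hence $W$ must be one of these two subspaces.

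I would then rule out both possibilities using an element of $H$ coming from the unbent side of the decomposition of $\Gamma$. In the separating case $\Gamma = \Gamma_1 \ast_\Lambda \Gamma_2$, the representation $\rho$ restricts to the original inclusion on $\Gamma_1$, so $\rho(\Gamma_1 \cap H) \subset \SO(J_n)$. An element $\gamma \in \SO(J_n)$ preserves $\langle e_1 \rangle$ if and only if it preserves $V_1 = e_1^\perp$, equivalently if and only if $\gamma$ is block diagonal with respect to the splitting, equivalently if and only if $\gamma$ stabilizes the totally geodesic $\HH^{n-1}$ associated to $\Sigma$. The stabilizer of this hyperplane inside $\Gamma_1$ is exactly $\Lambda$, and since the component $M_1$ of $M \setminus \Sigma$ has positive-volume interior disjoint from $\Sigma$, $\Lambda$ has infinite index in $\Gamma_1$. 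Consequently, the finite index subgroup $\Gamma_1 \cap H$ of $\Gamma_1$ cannot be contained entirely in $\Lambda$, producing an element of $\Gamma_1 \cap H$ that stabilizes neither $\langle e_1 \rangle$ nor $V_1$, a contradiction. The non-separating case is handled identically, with the HNN base $\pi_1(M \setminus \Sigma)$ (on which $\rho$ is the original inclusion) playing the role of $\Gamma_1$.

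The main obstacle I anticipate is verifying the geometric input: that each component of $M \setminus \Sigma$ has fundamental group properly containing $\Lambda$ with infinite index, so that intersection with an arbitrary finite index subgroup of $\Gamma$ still produces an element outside the totally geodesic stabilizer. This is routine geometry (e.g., a hyperbolic element of $\Gamma_1$ whose axis is not a lift of $\Sigma$ has all of its positive powers in distinct $\Lambda$-cosets). The remainder of the argument is a standard combination of Borel density and the irreducibility of the $\SO(n-1,1)$-action on $\R^n$.
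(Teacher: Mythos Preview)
Your argument is correct (for $n\geq 3$, as you note) and takes a genuinely different route from the paper. The paper proves irreducibility dynamically: it first shows (Lemma~\ref{gencusphull}) that for every $p\in\overline{\Omega}$ the convex hull of the $\Gamma$-orbit of $p$ has nonempty interior---a fairly delicate argument using horoballs, the Hilbert metric, and the structure of type~0/1 cusps---and then deduces that any invariant subspace meeting $\overline{\Omega}$ is all of $V$, handling subspaces disjoint from $\overline{\Omega}$ by passing to the dual domain. Your approach bypasses all of this convex geometry by exploiting the algebraic structure of the bending directly: Borel density for the hypersurface group $\Lambda$ pins down the lattice of $\Lambda$-invariant subspaces to $\{\langle e_1\rangle, V_1\}$, and then a single element of the unbent factor lying outside the hypersurface stabilizer kills both. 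This is shorter and more elementary, and it makes transparent exactly where the bending hypothesis is used. The trade-offs are that the paper's argument covers $n=2$ as well (where $\SO(1,1)$ is not irreducible on $\R^2$, so your reduction fails), and that the paper's Lemmas~\ref{gencusphull}--\ref{irreducible} are stated in a form that could in principle apply to convex projective manifolds not arising from bending, whereas your argument is tailored to the amalgam/HNN description.
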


Before proceeding with the proof of Theorem \ref{gencupsstrirred}, we need a few lemmas. If $P$ is a subset contained in some affine patch in $S(V)$ then let $\hull(P)$ denote the convex hull of $P$ (note that since $P$ is contained in an affine patch that this is well defined). 

\begin{lemma}\label{gencusphull}
		Suppose that $M=\Omega/\Gamma$ is a properly convex manifold \red{obtained from bending a finite volume hyperbolic manifold along an embedded totally geodesic hypersurface}
		then for any $p\in \overline{\Omega}$, $\hull(\Gamma\cdot p)$ has non-empty interior.
\end{lemma}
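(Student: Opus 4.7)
The lemma asks us to show that for any $p\in\overline{\Omega}$ the convex hull $\hull(\Gamma\cdot p)$ has nonempty interior, equivalently that $\Gamma\cdot p$ is not contained in any proper projective subspace of $S(V)$. My plan is to argue by contradiction: if $\hull(\Gamma\cdot p)$ has empty interior then the projective span $W\subsetneq S(V)$ of $\Gamma\cdot p$ is a proper projective subspace. Because $\Gamma$ acts linearly and $\Gamma\cdot p$ is $\Gamma$-invariant, $W$ is also $\Gamma$-invariant. It therefore suffices to prove the stronger statement that $\Gamma$ preserves no proper projective subspace of $S(V)$, i.e.\ that its holonomy acts irreducibly on $V$.

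To establish this irreducibility I would exploit the explicit form of the bending construction from Section \ref{bending}. Write $M=\Omega/\Gamma$ as the bending of $N=\HH^n/\Gamma_0$ along an embedded totally geodesic hypersurface $\Sigma$. In the separating case $\Gamma_0=\Gamma_1\ast_\Lambda\Gamma_2$ the construction forces $\rho_t|_{\Gamma_1}=\rho_0|_{\Gamma_1}$, so $\Gamma$ contains the undeformed subgroup $H:=\rho_0(\Gamma_1)\subset\SO(J_n)\subset\SL(V)$. In the non-separating case $\Gamma_0=\Gamma'\ast_\Lambda$ the same construction yields an undeformed subgroup $H:=\rho_0(\Gamma')\subset\SO(J_n)$. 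In either case $H$ is the image, under the hyperbolic holonomy, of the fundamental group of a non-trivial finite volume complete hyperbolic manifold with non-empty totally geodesic boundary, sitting inside $\SO(J_n)$ as a geometrically finite, non-elementary subgroup.

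The crux of the proof is then the claim that $H$ acts irreducibly on $V=\R^{n+1}$. Since the defining $(n+1)$-dimensional representation of $\SO(J_n)$ is irreducible, it is enough to check that $H$ is Zariski dense in $\SO(J_n)$. By standard results for discrete subgroups of $\Isom(\HH^n)$, this reduces to verifying that the limit set of $H$ is not contained in any proper totally geodesic subsphere of $\partial\HH^n$. This is the case because the underlying hyperbolic manifold with boundary has nonempty interior in $\HH^n$; if $H$ preserved a proper totally geodesic $\HH^k\subsetneq\HH^n$, then this $n$-dimensional manifold would collapse onto a lower-dimensional totally geodesic submanifold, a contradiction. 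Zariski density then upgrades to irreducibility, and since $\Gamma\supseteq H$, $\Gamma$ preserves no proper invariant subspace of $V$, contradicting the existence of $W$ and completing the proof.

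The main obstacle I anticipate is making the limit-set/Zariski density step rigorous, particularly in the non-cocompact setting where $\Sigma$ may cut through cusps of $N$; in that case one must check carefully that the piece of $N$ represented by $H$ still has a large enough limit set. Modulo this (standard but slightly finicky) verification, the argument is a direct consequence of the irreducibility of the standard representation of $\SO(n,1)$.
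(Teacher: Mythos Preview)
Your argument is correct and takes a genuinely different route from the paper's.

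The paper proves the lemma by a direct geometric analysis of the bent domain $\Omega$. In the closed case it quotes Vey. In the cusped case it fixes a cusp subgroup $\Delta$, sandwiches $\Omega$ between two horoballs $\mathcal{H}\subset\Omega\subset\mathcal{H}'$, and shows that for $p$ off a single supporting hyperplane $L$ the $\Delta$-orbit already contains a horoball. The remaining points $p\in\overline{\Omega}\cap L$ are then pushed off $L$ using the hypersurface group $\Lambda$; the hardest case, $p=p_\infty$ (the point dual to the hyperplane $H_\Lambda$), is dispatched by a delicate argument involving $C^1$ points of $\partial\Omega$, the Hilbert metric, and a contradiction with proper discontinuity. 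This lemma is then combined with Lemma~\ref{irreducible} and a duality argument to obtain irreducibility (Theorem~\ref{gencupsstrirred}).

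You instead note that irreducibility of $\Gamma$ on $V$ already implies the lemma (the linear span of any orbit is $\Gamma$-invariant, hence all of $V$, so the convex hull in an affine patch has nonempty interior), and you prove irreducibility directly by exhibiting the undeformed piece $H\subset\SO(J_n)$ and arguing it is Zariski dense there. This works, and the density step you flag as delicate is in fact routine and no harder in the cusped case: for $n\geq 3$, $H$ contains $\Lambda=\pi_1\Sigma$, which by Borel density is Zariski dense in $\SO(n-1,1)$; since $\so(n,1)$ decomposes under $\so(n-1,1)$ as $\so(n-1,1)\oplus\R^{n-1,1}$ with the second summand irreducible, the only proper closed connected subgroup containing $\SO(n-1,1)$ is $\SO(n-1,1)$ itself. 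But $H$ cannot lie in the stabiliser of $\HH^{n-1}$, because the complementary piece $M\setminus\Sigma$ has finite volume whereas a half-space quotient by a group acting on its bounding hyperplane has infinite volume. Hence $\overline{H}=\SO(n,1)$ and $H$ acts irreducibly on $\R^{n+1}$.

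Your approach is shorter and in fact yields Theorem~\ref{gencupsstrirred} directly, bypassing Lemma~\ref{irreducible} and the duality step (strong irreducibility follows because finite-index subgroups of $H$ remain Zariski dense). What the paper's argument buys is that it is intrinsic to the projective geometry of $\Omega$: it never appeals to the structure theory of $\SO(n,1)$ or to Borel density, and its method (orbits of cusp groups fill out horoballs) would adapt to deformations where no large undeformed semisimple subgroup is available.
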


\begin{proof}
		\red{If $M$ is closed then the result follows from \cite[Prop.\ 3]{Vey}, and so we assume that $M$ has at least 1 cusp, which by Theorem \ref{balmarbending} is a generalized cusp of type 0 or type 1. Let $\Delta$ be the fundamental group of one of the generalized cusps. By \cite[Lem.\ 5.7]{BalMar} we can find horoballs $\mathcal{H}$ and $\mathcal{H}'$ so that (after conjugating in $\SL(V)$) $\mathcal{H}\subset \Omega\subset \mathcal{H}'$. It follows that there is a unique projective hyperplane $L$ with the property that if $p\in \overline{\Omega}\backslash L$ then $\hull(\Lambda\cdot p)$ contains a horoball. In particular, for such $p$, $\hull(\Gamma\cdot p)$ has non-empty interior. In the coordinates of the previous section $L$ is the projective hyperplane coming from $\ker(e_{n+1}^\ast)$. Furthermore, $\overline{\Omega}\cap L$ is either the point $\infty$ if the cusp is type 0 or the line segment $\ell_\infty$ from the previous section if the cusp is type 1.

		In light of this, the proof will be complete if we can show that for each $p\in \overline{\Omega}$ the orbit $\Gamma\cdot p$ contains a point in $\overline{\Omega}\backslash L$.  Suppose that $p\in \overline{\Omega}\cap L$. Since $M$ is obtained by bending, it contains a subgroup $\Lambda$ corresponding to the fundamental group of the totally geodesic hypersurface. This subgroup preserves a copy of $(n-1)$-dimensional hyperbolic space, $H_\Lambda\subset \Omega$ and fixes a unique point $p_\infty\in P(V)$ dual to $H_\Lambda$. It follows that if $p\neq p_\infty$ then the $\Lambda$ orbit of $p$ accumulates to any point in $\partial H_\Lambda$. Since the plane $L$ is a supporting plane for $\Omega$ and the plane containing $H_\Lambda$ meets the interior of $\Omega$ it follows that there is a point of $\partial \Omega$ that is not contained in $L$ and hence a $g\in \Lambda$ so that $g\cdot p\notin L$. 
		
		This leave only the case where $p=p_\infty$. In this case let $g\in \Lambda$ be a hyperbolic isometry, let $p_0\in \partial H_\Lambda$ be its repelling fixed point, and let $h\in \Delta$ be parabolic (since $\Omega/\Gamma$ came from bending such an element is guaranteed to exist). Let $\ell$ be the projective line connecting $p_\infty$ and $p_0$. Since $p_\infty\in \ell_{\infty}$ and $h$ is parabolic it follows that $h\cdot p_\infty=p_\infty$ and so $\ell$ and $h\cdot \ell$ are contained in a projective 2-plane, $L'$. Let $\Omega'=\Omega\cap L'$, then $p_\infty$ is a $C^1$ point of $\partial \Omega'$. To see this observe that unless the cusp is type 1 and $p_\infty=q_+$, $p_\infty$ is already a $C^1$ point of $\partial \Omega$, and thus a $C^1$ point of $\partial \Omega'$. On the other hand, if $p_\infty=q_+$ then $p_\infty$ is not a $C^1$ point of $\partial \Omega$, however, $\mathcal{H}\cap L'\subset \Omega'\subset \mathcal{H}'\cap L'$. Both $\mathcal{H}\cap L'$ and $\mathcal{H}'\cap L'$ are projectively equivalent to copies of $\HH^2$ and meet at $p_\infty$, and so $p_\infty$ is a $C^1$ point of $\partial \Omega'$.  
		
		Since $p_\infty\in \partial \mathcal{H}$ and $\mathcal{H}\subset \Omega\subset \mathcal{H}'$, it follows that the line $\ell$ intersects the interior of $\Omega$. Let $x\in \ell\cap \text{int}(\Omega)$ and observe that for each natural number $n$, $g^n\cdot x\in \ell$, $hg^n\cdot x\in h\cdot \ell$, and both sequences limit to $p_\infty$ as $n\to \infty$ since $p_0$ is the repelling fixed point of $g$.  Since $p_\infty$ is a $C^1$ point of $\partial \Omega'$, it follows from \cite[Prop.\ 3.4 (H7)]{CoLoTi} that $d_n:=d_{\Omega'}(g^n\cdot x,hg^n\cdot x)\to 0$ as $n\to \infty$. However, $\Omega'$ is a totally geodesic subspace of $\Omega$ (with respect to $d_\Omega$), and so this implies that $d_\Omega(g^n\cdot x,hg^n\cdot x)\to 0$ as $n\to\infty$. It follows that $d_\Omega(x,g^{-n}hg^n\cdot x)\to 0$ as $n\to\infty$ but this is a contradiction since the group $\Gamma$ acts properly discontinuously on $\Omega$.

		  }

		\end{proof}

The following lemma is the basis for the proof of Theorem \ref{gencupsstrirred}. The lemma and its proof are inspired by a similar result of J.\ Vey \cite[Prop.\ 4]{Vey}.

\begin{lemma}\label{irreducible}
		Suppose that $\Omega\subset P(V)$ is properly convex and that $\Gamma\subset \SL(\Omega)$ is a group with the property that for every $p\in \overline{\Omega}$, $\hull(\Gamma\cdot p)$ has non-empty interior. If $L$ is a $\Gamma$-invariant subspace of $V$ and $P(L)\cap \overline{\Omega}\neq \emptyset$ then $L=V$.   
	\end{lemma}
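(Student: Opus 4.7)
The approach is straightforward proof by contradiction, exploiting the fact that a proper projective subspace cannot contain a set with interior.

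Suppose for contradiction that $L \subsetneq V$ is a proper $\Gamma$-invariant subspace with $P(L) \cap \overline{\Omega} \neq \emptyset$. Pick any point $p \in P(L) \cap \overline{\Omega}$. Since $L$ is $\Gamma$-invariant, the projective subspace $P(L)$ is setwise preserved by $\Gamma$, so the entire orbit satisfies
\[
\Gamma \cdot p \subset P(L) \cap \overline{\Omega}.
\]

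Next I would observe that, because $\Omega$ is properly convex, its closure $\overline{\Omega}$ lies in some affine patch $A$ of $P(V)$. Thus $\Gamma \cdot p \subset A$, so $\hull(\Gamma \cdot p)$ is well-defined and equal to the ordinary convex hull computed in $A$. Since the orbit lies in $P(L) \cap A$, which is an affine subspace of $A$ of dimension $\dim L - 1 < n$, the convex hull is contained in this lower-dimensional affine subspace, and therefore has empty interior in $P(V)$.

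This directly contradicts the standing hypothesis that $\hull(\Gamma \cdot p)$ has non-empty interior, so no such proper $L$ exists, which forces $L = V$.

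There is essentially no obstacle: the only thing that could go wrong is the proper-convexity bookkeeping (ensuring the orbit lies in an affine patch so that convex hull makes sense and really stays inside $P(L)$), but this is immediate from $\overline{\Omega}$ being contained in an affine patch by the definition of proper convexity. The lemma is really just the statement that ``proper projective subspaces have empty interior,'' packaged with the hypothesis to rule out orbits trapped in such subspaces.
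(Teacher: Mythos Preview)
Your proof is correct and follows essentially the same approach as the paper: both argue that the orbit $\Gamma\cdot p$ lies in $P(L)$, hence so does its convex hull, and then use that a proper projective subspace has empty interior. The only cosmetic difference is that the paper phrases this directly (concluding $P(L)$ has non-empty interior, hence $L=V$) while you phrase it as a contradiction; your added remark about working in the affine patch to justify that the hull stays in $P(L)\cap A$ is, if anything, slightly more careful than the paper's version.
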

	\begin{proof}
	Let $L\subset V$ be a $\Gamma$-invariant subspace such that $P(L)\cap \overline{\Omega}\neq \emptyset$, and let $p$ be a point in the intersection. Since $p\in \overline{\Omega}$ it follows that $\hull(\Gamma\cdot p)$ has non empty interior. Furthermore, since $p\in L$ and $L$ is both $\Gamma$-invariant and convex it follows that $\hull(\Gamma\cdot p)\subset P(L)$. Since $\hull(\Gamma\cdot p)$ has non-empty interior so does $P(L)$. It follows that $L=V$.  	
	\end{proof}
	
	We can now prove Theorem \ref{gencupsstrirred}
	
	\begin{proof}[Proof of Theorem \ref{gencupsstrirred}]
		Suppose that $L\subset V$ is a $\Gamma$-invariant subspace. First assume that $P(L)\cap \overline{\Omega}\neq \emptyset$. Combining Lemmas  \ref{gencusphull} and \ref{irreducible} it follows that $L=V$. On the other hand, suppose that $L\cap \overline{\Omega}=\emptyset$ then $L$ corresponds to a non-trivial subspace $L^\ast\subset V^\ast$ such that $P(L^\ast)\cap \overline{\Omega^\ast}\neq \emptyset$. \red{Since $\Omega/\Gamma$ is obtained from bending it follows that $\Omega^\ast/\Gamma^\ast$ is also obtained from bending a finite volume manifold along an embedded totally geodesic hypersurface (see Remark \ref{dual_bending_is_bending})} and so we can apply the same argument as before to show that $L^\ast=V^\ast$. It follows that $L=0$, and so there are no proper non-trivial $\Gamma$-invariant subspace. Hence $\Gamma$ acts irreducibly on $V$. 
		
		Finally, if $\Gamma'$ is a finite index subgroup of $\Gamma$ then  \red{$\Omega/\Gamma'$ is a properly convex manifold that also arises from bending a finite volume hyperbolic manifold along an embedded totally geodesic hypersurface (again, see Remark \ref{dual_bending_is_bending})}, and so by the argument above $\Gamma'$ also acts irreducibly on $V$. 

	\end{proof}

\subsection{Zariski closures and limit sets}\label{zarclosure}
We close this section by describing some properties of the Zariski closure of the groups obtained by bending. Before proceeding we introduce some terminology and notation. Let $g\in \SL(V)$ then $g$ is \emph{proximal} if $g$ has a unique (counted with multiplicity) eigenvalue of maximum modulus. It follows that this eigenvalue must be real and that $g$ is proximal if and only if $g$ has a unique attracting fixed point for its action on $P(V)$. If $G$ is a subgroup of $\SL(V)$ then $G$ is \emph{proximal} if it contains a proximal element. 

If $G\subset \SL(V)$ is a group then we define the \emph{limit set} of $G$, denoted $\Lambda_G$ as 
$$\Lambda_G=\overline{\{x\in P(v)\mid \textup{$x$ a fixed point of some proximal $g\in G$}\}}$$

By construction, this $\Lambda_G$ is closed and if $G$ is proximal then $\Lambda_G$ is non-empty. In this generality the limit set was introduced by Goldscheid--Guivarc'h \cite{GoldGuiv} and this construction reduces to the more familiar notion of limit set when $G$ is a Kleinian group. The limit set has the following important properties. 

\begin{theorem}[Thm.\ 2.3 of \cite{GoldGuiv}]\label{minimal}
	If $G$ is proximal and acts irreducibly on $V$ then $\Lambda_G$ is the unique minimal non-empty closed $G$-invariant subset of $P(V)$.
\end{theorem}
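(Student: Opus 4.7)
The plan is to establish two things: (i) $\Lambda_G$ is itself a non-empty closed $G$-invariant subset of $P(V)$, and (ii) every non-empty closed $G$-invariant subset $F \subset P(V)$ contains $\Lambda_G$. Together these force $\Lambda_G$ to be the unique minimal such set.

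For (i), non-emptiness is the definition of proximality, closedness is built into the definition of $\Lambda_G$, and $G$-invariance holds because if $g\in G$ and $h\in G$ is proximal with attracting fixed point $x_h^+$, then $ghg^{-1}$ is proximal with attracting fixed point $g\cdot x_h^+$; thus the set of attracting fixed points of proximal elements of $G$ is $G$-invariant, and so is its closure.

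For (ii), the key input is a standard dynamical fact about a single proximal element $h\in G$: writing $\lambda$ for its unique eigenvalue of maximum modulus, $V$ splits as an $h$-invariant direct sum $V = L_h\oplus H_h$, where $L_h$ is the one-dimensional $\lambda$-eigenspace (projecting to $x_h^+$) and $H_h$ is the sum of the remaining generalized eigenspaces. A direct Jordan-form estimate then shows that for every $y\in P(V)\setminus P(H_h)$ one has $h^n\cdot y\to x_h^+$ as $n\to\infty$. Granting this, let $F$ be any non-empty closed $G$-invariant subset of $P(V)$ and let $W\subset V$ be the linear span of the cone over $F$. Since $F$ is $G$-invariant, $W$ is a $G$-invariant subspace, and since $F\neq\emptyset$, $W\neq 0$. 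By irreducibility $W=V$, so $F$ is not contained in any proper projective subspace; in particular $F\not\subset P(H_h)$. Pick $y\in F\setminus P(H_h)$. Then $h^n\cdot y\to x_h^+$, and closedness plus $G$-invariance of $F$ give $x_h^+\in F$. Hence every attracting fixed point of a proximal element of $G$ lies in $F$, and closedness of $F$ yields $\Lambda_G\subset F$.

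The main obstacle is the dynamical convergence $h^n\cdot y\to x_h^+$ for $y\notin P(H_h)$. It is not hard but must be done carefully, since $h$ restricted to $H_h$ need not be diagonalisable: one must show that, after rescaling by $\lambda^{-n}$, the iterates $\lambda^{-n}h^n$ converge in $\mathrm{End}(V)$ to the rank-one projection onto $L_h$ along $H_h$, using that every other eigenvalue of $h$ has strictly smaller modulus so its Jordan blocks contribute terms that decay polynomially-times-geometrically. Once this is in hand, combining it with the irreducibility trick above completes the argument in a few lines.
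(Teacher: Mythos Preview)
The paper does not give its own proof of this statement: it is quoted as Theorem~2.3 of \cite{GoldGuiv} and used as a black box in the proof of Lemma~\ref{limitset}. So there is nothing in the paper to compare your argument against directly.

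That said, your proof is correct and is essentially the standard argument. A few minor remarks:

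\begin{itemize}
\item You are implicitly reading the paper's definition of $\Lambda_G$ as the closure of the set of \emph{attracting} fixed points of proximal elements, rather than all fixed points. This is the intended reading (and the one in \cite{GoldGuiv}); with the literal reading the theorem would fail, since a proximal element can have eigenlines in $P(H_h)$ that need not lie in every closed invariant set.
\item In part (i), your invariance argument is fine; one could add a word that the closure of a $G$-invariant set is $G$-invariant because $G$ acts by homeomorphisms, but this is routine.
\item In part (ii), the passage from ``$F$ spans $V$'' to ``$F\not\subset P(H_h)$'' is correct because $H_h$ is a proper subspace. The convergence $\lambda^{-n}h^n\to$ (rank-one projection onto $L_h$ along $H_h$) in $\mathrm{End}(V)$ is exactly the right way to justify the dynamical step; the polynomial-times-geometric decay of the sub-dominant Jordan blocks is the standard computation.
\end{itemize}

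Overall: the proposal is a clean, self-contained proof of the cited result, and nothing further is needed.
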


Next, let $M=\HH^n/\Gamma$ be a finite volume (non-compact) hyperbolic manifold containing an embedded totally geodesic hypersurface $\Sigma$, let $\Gamma_t=\rho_t(\Gamma)$ be the group obtained by bending $M$ along $\Sigma$, and let $G_t$ be the Zariski closure of $\Gamma_t$. The following lemma summarizes some properties of $G_t$ and its relation to $\Lambda_G$.

\begin{lemma}\label{limitset}
	Let $\rho_t$ be obtained by bending $M$ along $\Sigma$, let $\Gamma_t=\rho_t(\Gamma)$ and let $G_t$ be the Zariski closure of $\Gamma_t$ then 
	\begin{itemize}
		\item The identity component, $G^0_t$, of  $G_t$ is semisimple, proximal, and acts irreducibly on $V$
		\item $\Lambda_{G^0_t}=G^0_t\cdot x$ for any $x\in \Lambda_{G^0_t}$. 
	\end{itemize}
\end{lemma}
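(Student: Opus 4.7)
The plan is to extract each property of $G_t^0$ from the strong irreducibility of $\rho_t$ (Theorem \ref{gencupsstrirred}) together with the fact, guaranteed by Theorem \ref{Marquisbending}, that $\Gamma_t$ preserves a properly convex open domain $\Omega_t$, and then to derive the orbit description from minimality (Theorem \ref{minimal}) plus algebraicity.

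First I would show that $G_t^0$ acts irreducibly on $V$. Since $G_t^0$ has finite index in $G_t$, the subgroup $\Gamma_t^0 := \Gamma_t\cap G_t^0$ has finite index in $\Gamma_t$, and a coset-decomposition argument shows it is Zariski dense in $G_t^0$ (otherwise $G_t$ would be a finite union of translates of a proper subvariety of $G_t^0$, contradicting $\dim G_t = \dim G_t^0$). Strong irreducibility of $\rho_t$ applied to the preimage of $\Gamma_t^0$ in $\pi_1 M$ then gives that $\Gamma_t^0$ acts irreducibly on $V$, and irreducibility is inherited by its Zariski closure $G_t^0$. For semisimplicity and proximality I would appeal to Benoist's results on Zariski closures of subgroups of $\SL(V)$ that preserve a properly convex open cone: because $\Gamma_t\subset \SL(\Omega_t)$ and acts strongly irreducibly, the Zariski closure $G_t$ is reductive and, lying in $\SL(V)$ with finite center, its identity component $G_t^0$ is semisimple; the same framework, combined with the presence of loxodromic elements in the unbent part of $\Gamma_t$ inherited from $\Gamma$, produces a proximal element and hence proximality of $G_t^0$.

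Once these three properties are in hand, Theorem \ref{minimal} identifies $\Lambda_{G_t^0}$ as the unique minimal non-empty closed $G_t^0$-invariant subset of $P(V)$. Fix $x\in \Lambda_{G_t^0}$. Because $G_t^0$ acts algebraically on $P(V)$, the orbit $G_t^0\cdot x$ is a locally closed (constructible) subset of $P(V)$, so $F:=\overline{G_t^0\cdot x}\setminus G_t^0\cdot x$ is a closed $G_t^0$-invariant set. By minimality, $\overline{G_t^0\cdot x}=\Lambda_{G_t^0}$, so $F$ is a proper closed $G_t^0$-invariant subset of $\Lambda_{G_t^0}$; a second application of minimality forces $F=\emptyset$, yielding $G_t^0\cdot x=\Lambda_{G_t^0}$.

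The main obstacle is the semisimplicity and proximality step: neither is a formal consequence of strong irreducibility alone, and both depend on Benoist's analysis of algebraic subgroups of $\SL(V)$ that preserve a properly convex open cone in $V$. Once those inputs are accepted, the irreducibility step is routine and the final orbit statement follows cleanly from minimality combined with the fact that orbits of algebraic group actions are locally closed.
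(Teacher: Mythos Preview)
Your overall architecture is sound and the irreducibility step matches the paper exactly. There are two points worth flagging.

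\textbf{Semisimplicity and proximality.} You treat these as the hard part and defer to Benoist's cone results, but the paper does not invoke Benoist here, and in fact his divisibility theorems assume a cocompact action, which is not available in the noncompact case. Instead the paper argues directly from irreducibility: the unipotent radical $R_t\lhd G_t^0$ has, by Lie--Kolchin, a nontrivial subspace of common $1$-eigenvectors, and normality makes this subspace $G_t^0$-invariant, forcing $R_t$ to act trivially; hence $G_t^0$ is reductive. Proximality comes from the observation that $\rho_t\vert_{\pi_1\Sigma}=\rho_0\vert_{\pi_1\Sigma}$, so the hyperbolic isometries of the hypersurface group remain proximal in $\Gamma_t$ (this is your ``loxodromic elements from the unbent part''). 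A proximal element then gives a one-dimensional eigenspace that any central element must preserve, so by irreducibility the center of $G_t^0$ is contained in $\{\pm I\}$, upgrading reductive to semisimple. So what you flag as the main obstacle is actually more elementary than you suggest, and needs no convex-geometry input beyond irreducibility.

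\textbf{The orbit description.} Here you genuinely diverge from the paper. The paper produces one explicit closed orbit: taking an Iwasawa decomposition $G_t^0=KAN$, proximality (via \cite{AbMarSo}) yields a unique $N$-fixed point $x_N$, which is then $AN$-fixed, so $G_t^0\cdot x_N=K\cdot x_N$ is compact and hence closed; minimality then gives $\Lambda_{G_t^0}=G_t^0\cdot x_N$. Your route avoids Iwasawa entirely: you use that algebraic-group orbits are locally closed, so the boundary of any orbit in $\Lambda_{G_t^0}$ is a proper closed invariant subset, which minimality forces to be empty. Both arguments are correct; yours is shorter and uses only a general structural fact about algebraic actions, while the paper's is more constructive and identifies $\Lambda_{G_t^0}$ concretely as a compact $K$-orbit (information that is not needed later).
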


\begin{proof}
	 The group $G^0_t$ is a finite index subgroup of $G_t$ and contains the group $G^0_t\cap \Gamma_t$ which has finite index in $\Gamma_t$. By Theorem \ref{gencupsstrirred} it follows that $G^0_t\cap \Gamma_t$ and hence $G^0_t$ acts irreducibly on $V$, \red{and so $V$ becomes a simple $\R[G^0_t]$ module. Let $R_t$ be the unipotent radical of $G_t^0$ and let $V_\C$ be the complexification of $V$. Since $R_t$ is unipotent and solvable the Lie-Kolchin theorem implies that  there is a non-trivial $\C[R_t]$-submodule, $E_\C$, of $V_\C$ consisting of simultaneous 1-eigenvalues of $R_t$. The submodule $E_\C$ is conjugation invariant and so there is a non-trivial $\R[R_t]$ submodule, $E_\R$ of $V$ whose complexification is $E_\C$. Furthermore, since $R_t$ is normal in $G_t^0$ it follows that $E_\R$ is also a $\R[G_t^0]$ submodule. By simplicity, it follows that $E_\R$ equals $V$, and so $R_t$ acts trivially on $V$, and is thus trivial. Hence $G^0_t$ is reductive.  }
	 
	  The group $\rho_0(\pi_1\Sigma)$ is easily seen to contain a proximal element and by construction $\rho_t(\pi_1\Sigma)=\rho_0(\pi_1\Sigma)$. It follows that $\Gamma_t$ \red{(and hence $G^0_t$) contains a proximal element $g$. Next, suppose that $h$ is an element in the center of $G^0_t$. The element $g$ has a 1-dimensional real eigenspace $V_g\subset V$. Since $h$ is central it preserves $V_g$ and thus also has a real eigenspace $V_h$ (possibly of dimension larger than 1). However, since $h$ is central, $V_h$ is also $G^0_t$-invariant, which implies that $V_h=V$ and so $h$ is a scalar matrix. Since $G^0_t\subset \SL(V)$, we must have $h=\pm I$. It follows that the center of $G^0_t$ is discrete. Since $G^0_t$ is reductive, its radical is a connected subgroup of its center, and so the radical is actually trivial. Hence $G^0_t$ is also semisimple}.
	  	
	Next, let $G^0_t=KAN$ be an Iwasawa decomposition of $G^0_t$. Since \red{$G^0_t$} is proximal \red{it follows from \cite[Thm 6.3]{AbMarSo}}, that $N$ has a unique global fixed point $x_N\in P(V)$, which is a weight vector for the highest weight of $G^0_t$ with respect to this decomposition. Since $A$ normalizes $N$ it follows that $A$ also preserves $x_N$, and so $G^0_t\cdot x_N=K\cdot x_N$ is a closed orbit, (since $K$ is compact). Furthermore, it is easy to see that $x_N\in \Lambda_{G^0_t}$ and so $G\cdot x_N$ is a closed $G^0_t$-invariant subset of $\Lambda_{G^0_t}$. Therefore, by Theorem \ref{minimal}, $G^0_t\cdot x_N=\Lambda_{G^0_t}$. Finally, an orbit is the orbit of any of its points and so it follows that if $x\in \Lambda_{G^0_t}$ then $\Lambda_{G^0_t}=G^0_t\cdot x$.
\end{proof}

\section{Arithmetic lattices}\label{arithmeticlattices}

Up until now we have been implicitly working over the real numbers. In this section we will have to work with other fields and rings and we would like this to be explicit in our notation. For this reason when we discuss groups of matrices we will need to explicitly specify where the entries lie. Henceforth, we will denote $\SO(J_n)$ as $\SO(n,1)$.  

Let $F$ be a number field and recall that $F$ is \emph{totally real} if every embedding $\sigma:F\to \C$ has the property that $\sigma(F)\subset \R\subset \C$. By choosing one of these embeddings we will regard $F$ as a subfield of $\R$. If $\alpha\neq 0$ is an element of a totally real field then define $s(\alpha)$ to be the number of non-identity embeddings $\sigma:F\to \R$ for which $\sigma(\alpha)>0$. 

\subsection{Lattices in $\SO(n,1)$}\label{lattices_in_son1}

There are multiple constructions that give rise to different classes of arithmetic lattices in $\SO(n,1)$. We now explain the simplest of these constructions and the only one that will be relevant for our purposes.  

Let $F$ be a totally real number field, let $\O_F$ be its ring of integers and suppose we have chosen $\alpha_1,\ldots,\alpha_n$ be positive elements of \red{$\mathcal{O}_F$} such that $s(\alpha_i)=0$ (i.e. the $\alpha_i$ are negative under all other embeddings of $F$). Let $\vec\alpha=(\alpha_1,\ldots,\alpha_n)$ and define $J^{\vec\alpha}=diag(\alpha_1,\ldots,\alpha_n,-1)$. Next, let $\mathbb{X}\in \{\R,F,\O_F\}$ and  define the groups $\SO(J^{\vec\alpha},\mathbb{X})=\{A\in \SL(n+1,\mathbb{X})\mid A^tJ^{\vec\alpha}A=J^{\vec\alpha}\}$. It is well known that $\SO(J^{\vec\alpha},\O_F)$ is a lattice in $\SO(J^{\vec\alpha},\R)$ \red{(see \cite[\S 6.4]{WitteMorris}, particularly Prop.\ 6.4.4 for a detailed explanation)}. Furthermore, the forms $J^{\vec\alpha}$ and $J_n$ are $\R$-equivalent \red{and so $\SO(J^{\vec{\alpha}},\R)$ and $\SO(n,1)$ are conjugate Lie groups} and so $\SO(J^{\vec\alpha},\O_F)$ \red{is conjugate to} a lattice in $\SO(n,1)$. Hence we can regard $\HH^n/\SO(J^{\vec\alpha},\O_F)$ as a hyperbolic orbifold. The lattices constructed in this fashion are cocompact if and only if $F\neq \Q$. A lattice in $\SO(n,1)$ that is commensurable with $\SO(J^{\vec\alpha},\O_F)$ for some choice of $F$ and $\vec\alpha$ is called an \emph{arithmetic lattice of orthogonal type}.

If $\tilde\Gamma=\SO(J^{\vec \alpha},\O_F)$ constructed above, then $O=\HH^n/\tilde \Gamma$ will contain several immersed totally geodesic hypersurfaces, and we now describe one of them and show how it can be promoted to an embedded totally geodesic hypersuface with nice intersection properties in a finite sheeted manifold cover of $O$. Specifically, let $\vec\alpha_1=(\alpha_2\ldots,\alpha_n)$, then $\tilde\Gamma_1=\SO(J^{\vec\alpha_1},\O_F)$ embeds reducibly in $\SO(J^{\vec\alpha},\O_F)$ via

$$\SO(J^{\red{\vec\alpha_1}},\O_F)\hookrightarrow\begin{pmatrix}
	1 & \\
	& \SO(J^{\red{\vec\alpha_1}},\O_F)
\end{pmatrix}$$

 Furthermore, $\tilde\Gamma_1$ is (commensurable with) a lattice in $\SO(n-1,1)$. The obvious embedding of $\tilde \Gamma_1$ into $\tilde \Gamma$ induces an immersion of $\HH^{n-1}/\tilde \Gamma_1$ in $\HH^n/\tilde \Gamma$. By combining results of Bergeron \cite{Bergeron}, and Selberg's Lemma we can find finite index subgroups $\Gamma$ (resp.\ $\Gamma_1$) so that $M=\HH^n/\Gamma$ (resp.\ $M_1=\HH^{n-1}/\Gamma_1$) is a manifold and $M_1$ is an embedded totally geodesic hypersurface in $M$. Furthermore, if $M$ is noncompact, then by using the argument from \cite[Thm 7.1]{BalMar} it is possible pass to a further finite cover of $M$ where all the cusps \red{are torus cusps} and the intersection of $M_1$ with one of the \red{cusps is connected}. Shortly we will bend $M$ along $M_1$ in order to produce thin subgroups in lattices in $\SL(n+1,\R)$.

\subsection{Lattices in $\SL(n+1,\R)$}

Next, we describe the lattices in $\SL(n+1,\R)$ in which we will construct thin subgroups. The construction is similar to the one in the previous section, and can be thought of as its ``unitary'' analogue. 

Again, let $F$ be a totally real number field, let $\O_F$ be its ring of integers, and suppose we have chosen $\alpha_1,\ldots,\alpha_n$ to be positive elements of $\mathcal{O}_F$ such that $s(\alpha_i)=0$. Next, let $L$ be a real quadratic extension of $F$ and let $\O_L$ be the ring of integers of this number field. $L$ is a quadratic extension of $F$ and so there is a unique non-trivial Galois automorphism of $L$ over $F$ that we denote $\tau:L\to L$.

If $M$ is a matrix with entries in $L$ then the \emph{conjugate transpose of $M$ (over $L$)}, denote $M^\ast$ is the matrix obtained by taking the \red{transpose} of $M$ and applying $\tau$ to its entries. A matrix $M$ is called \emph{$\tau$-Hermitian} if it has entries in $L$ and is equal to its conjugate transpose. Observe that the matrix $J^{\vec\alpha}$ is diagonal with entries in $F$, and so $J^{\vec\alpha}$ is $\tau$-Hermitian. Furthermore, it is a standard result (see \cite[\S 6.8]{WitteMorris}, for example) that $\SU(J^{\vec\alpha},\O_L,\tau):=\{A\in \SL(n+1,\O_L)\mid A^\ast J^{\vec \alpha}A=J^{\vec\alpha}\}$ is an arithmetic lattice in $\SL(n+1,\R)$ that is cocompact if and only if $F\neq\Q$.

\section{The construction}\label{construction}

In this section we describe the the construction of the thin groups in Theorem \ref{mainthm}. Recall that $F$ is a totally real number field, $\alpha_1,\ldots,\alpha_n$ are positive elements of $F$ such that $s(\alpha_i)=0$.

Next, we construct a certain real quadratic extension of $L$. In order to proceed with the construction, we require the following:

\begin{lemma}\label{specialunits}
 Let $F$ be any totally real field and $N>0$, then $F$ contains infinitely many units $u$ with the properties that:

\begin{enumerate}
	\item  At the identity embedding of $F$, $u > N$
	\item At all the other
embeddings $\sigma : F\rightarrow \R$ one has $ 0 < \sigma(u) < 1$.\end{enumerate}
\end{lemma}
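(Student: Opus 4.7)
The plan is to deduce the result from Dirichlet's unit theorem. Let $d=[F:\Q]$ and let $\sigma_1=\mathrm{id},\sigma_2,\ldots,\sigma_d$ be the distinct real embeddings of $F$. Consider the logarithmic embedding
\[
\ell\colon\O_F^\times\longrightarrow\R^d,\qquad u\longmapsto\bigl(\log|\sigma_1(u)|,\ldots,\log|\sigma_d(u)|\bigr).
\]
Since $F$ is totally real, Dirichlet's unit theorem tells us that $\ell(\O_F^\times)$ is a full-rank lattice $\Lambda$ in the hyperplane $H=\{x\in\R^d\mid x_1+\cdots+x_d=0\}$, with kernel the torsion subgroup $\{\pm 1\}$.

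Conditions (1) and (2) of the lemma, ignoring signs for the moment, translate into the statement that $\ell(u)$ lies in the open region
\[
R=\{x\in H\mid x_1>\log N,\ x_i<0\text{ for }i\ge 2\}.
\]
The region $R$ is non-empty (it contains the ray through $(d-1,-1,\ldots,-1)$ pushed far enough from the origin) and contains round balls of arbitrarily large radius. Because $\Lambda$ has full rank in $H$, every ball in $H$ of radius at least the covering radius of $\Lambda$ meets $\Lambda$. Therefore $\Lambda\cap R$ contains lattice points of arbitrarily large norm, and in particular is infinite. I would next note that taking $\ell$-preimages converts this to infinitely many units $v\in\O_F^\times$ with $|\sigma_1(v)|>N$ and $|\sigma_i(v)|<1$ for $i\ge 2$.

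The only remaining point is the positivity requirement. To upgrade from bounds on $|\sigma_i(v)|$ to the asserted bounds on $\sigma_i(u)$, I would replace each such $v$ by $u:=v^2$, which is totally positive, satisfies $0<\sigma_i(u)<1$ for $i\ge 2$, and satisfies $\sigma_1(u)=|\sigma_1(v)|^2$. Performing the construction with $\sqrt{N}$ in place of $N$ then yields $\sigma_1(u)>N$. Since the squaring map on $\O_F^\times$ has kernel contained in $\{\pm 1\}$, infinitely many $v$'s produce infinitely many distinct totally positive $u$'s, completing the proof. There is no serious obstacle here: this is a direct application of Dirichlet's theorem, with the only subtle point being the need to pass to squares so as to control signs at every embedding simultaneously.
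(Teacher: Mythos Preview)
Your proof is correct and follows essentially the same approach as the paper: both invoke Dirichlet's unit theorem to realize $\O_F^\times$ (up to torsion) as a full lattice in the trace-zero hyperplane, locate lattice points in the open cone where the first coordinate is large and positive and the others are negative, and pass to squares to force total positivity. The only cosmetic differences are that the paper squares the generators at the outset and then builds the desired unit as an explicit integer combination of them (via a rational approximation to $(1,-1/k,\ldots,-1/k)$ followed by clearing denominators), whereas you square at the end and phrase the existence of lattice points via a covering-radius argument; neither variation affects the substance.
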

\begin{proof} Suppose that $[F:\Q] =k+1$ and let $v_1,....., v_k$ be generators of the 
unit group, \red{$\mathcal{O}_F^\times$}, as determined by Dirichlet's Unit Theorem. 

There is an embedding \red{$\sigma:F\to \R^{k+1}$ given by $\sigma(x)=(\sigma_1(x),\ldots,\sigma_{k+1}(x))$, where the $\sigma_i$ are all the embeddings of $F$ into $\R$, chosen so that $\sigma_1$ is the identity.} By replacing each $v_i$ with its square we can suppose that \red{$\sigma(v_i)$ is contained in the positive orthant of $\R^{k+1}$.} 
This will replace $\O_F^\times$ with a subgroup of finite index in $\O_F^\times$.

Taking \red{componentwise} logarithms gives a map, \red{$\log: \R^{k+1}_+\to \R^{k+1}$, where $\R^{k+1}_+$ is the positive orthant in $\R^{k+1}$. Furthermore, since each $v_i$ is a unit, it follows that $\log(\sigma(v_i))$} lies in the hyperplane where the sum of the coordinates is
equal to zero. Dirichlet's Unit Theorem implies that the
set \red{$B=\{\log(\sigma(v_1)),....., \log(\sigma(v_k))\}$} is a basis for this hyperplane, so there is
a linear combination of their images which yield the vector
$\vec{a}=(1, -1/k,-1/k,......, -1/k)$, \red{with respect to the basis $B$}, hence there is a rational linear combination
giving a vector very close to $
\vec{a}$. By scaling \red{to clear denominators},
one obtains an {\em integer} linear combination with the property that
the last $k$ coordinates are negative and the first coordinate is
positive. After possibly taking further powers (to arrange $u >N$) and exponentiating
one obtains a unit with the required properties. \end{proof}
\begin{remark}
	Notice that once a unit $u$ satisfies the above conditions, so
do all its powers.
\end{remark} 

Next, let $u$ be one of the units guaranteed by Lemma \ref{specialunits} for $N>2$. Note that by construction, $u^2-4>0$ \red{and $\sigma(u^2-4)=\sigma(u)^2-4<0$ for all non-identity embeddings of $F$. In particular, this implies that $u$ is not a square.} Let $s$ be a root of the polynomial $p_u(x)=x^2-ux+1$ and let $L=F(s)$. \red{Note that $u^2-4$ is the discriminant of this polynomial. By construction, $L$ is a real quadratic extension of $F$. Furthermore, since $\sigma(u^2-4)<0$ for all non-identity embeddings $L$ has exactly 2 real places.} Let $\tau:L\to L$ be the unique non-trivial Galois automorphism of $L$ over $F$. By construction, $s\in \O_L$ and since $\tau(s)$ is the other root of $p_u(x)$, a simple computation shows that $\tau(s)=1/s$, and so $s\in \O_L^\times$. With this in mind, we henceforth call elements $u\in L$ such that $\tau(u)=1/u$ \emph{$\tau$-unitary} or just \emph{unitary} if $\tau$ is clear from context. Note, that $\tau$-unitary elements in $\O_L$ are all units.

Every power of $s$ (and indeed $-s$) is also unitary. Furthermore, we note that these are the only possible unitary elements of $\O_L^\times$. The reason is this: notice that the rank of the unit group of $\O_F$ is $[F: \Q] - 1$. Also, $F(s)$ has two real embeddings, (coming from $s$ and $1/s$) and all the other embeddings lie on the unit circle (in other words, $s$ is a so-called {\em Salem number}) since we required the other embeddings of $u$ were less than $2$ in absolute value. So by Dirichlet's theorem, the unit group of  $\O_L$ has rank 
$$2 + ( 2[F: \Q] - 2)/2 -1 = [F: \Q],$$
which is 1 larger than the rank of $\O_F^\times$. Since $\tau$ induces an automorphism of the unit group that fixes $\O_F^\times$, the possibilities for are all accounted for by $s$ and its powers.

From the discussion of the previous section we can find torsion-free subgroups $\Gamma$ (resp.\ $\Gamma_1$) commensurable with $\SO(J^{\vec\alpha},\O_F)$ (resp.\ $\SO(J^{\vec\alpha_i},\O_F)$) such that $M_1:=\HH^{n-1}/\Gamma_1$ is an embedded submanifold of $M:=\HH^n/\Gamma$.  As previously mentioned, we can regard $(M,id)$ as a complete hyperbolic (and hence convex projective) structure on $M$ whose holonomy $\rho$ is the inclusion of $\Gamma$ into $\SL(n+1,\R)$. Since $M$ contains an embedded totally geodesic hypersurface, $M_1$, it is possible to bend $M$ along $M_1$ to produce a family of representations $\rho_t:\Gamma\to \SL(n+1,\R)$. We now show that for various special values of the parameter $t$, the group $\rho_t(\Gamma)$ will be a thin group inside a lattice in $\SL(n+1,\R)$. These special values turn out to be logarithms of unitary elements of $\O_L$.

The main goal of the remainder of this section is to prove the following theorem

\begin{theorem}\label{rhotinlambda}
	If $u\in\O_L$ is unitary and $t=\log\abs{u}$ then $\rho_t(\Gamma)\subset \SU(J^{\vec\alpha},\O_L,\tau)$. 
\end{theorem}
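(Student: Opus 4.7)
The plan is to reduce everything to a single assertion: the bending matrix $B_t$ itself belongs to $\SU(J^{\vec\alpha},\O_L,\tau)$ when $t=\log|u|$ for $u\in\O_L$ unitary. Once this is established, the theorem follows almost immediately. After passing to a finite-index subgroup if needed, we may assume $\Gamma\subset \SO(J^{\vec\alpha},\O_F)$. Because $\tau$ fixes $F$ pointwise, the conjugate-transpose coincides with the ordinary transpose on matrices with $\O_F$ entries, so the inclusion $\SO(J^{\vec\alpha},\O_F)\subset \SU(J^{\vec\alpha},\O_L,\tau)$ is automatic. Inspecting the construction of $\rho_t$ in Section \ref{bending}, in both the amalgamated (separating) and HNN (non-separating) cases, the image $\rho_t(\Gamma)$ is generated by elements $\rho(\gamma)\in \SO(J^{\vec\alpha},\O_F)$ together with either conjugation by $B_t$ or the single extra generator $B_t\rho(s)$. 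So membership of $\rho_t(\Gamma)$ in $\SU(J^{\vec\alpha},\O_L,\tau)$ will follow once we know $B_t\in \SU(J^{\vec\alpha},\O_L,\tau)$.

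The key step is therefore to verify that $B_t$ is $\tau$-unitary for $J^{\vec\alpha}$. Since $u$ is unitary, so is $-u$, and the two have the same absolute value, so we may assume $u>0$ under the identity embedding and thus $|u|=u$. Then
\[
B_t=\mathrm{diag}(u^{-n},u,u,\ldots,u),
\]
which has entries in $\O_L$ (a unitary element of $\O_L$ is automatically a unit, so $u^{-1}\in\O_L$) and determinant $1$. Because $B_t$ is diagonal and $\tau(u)=u^{-1}$, one reads off $B_t^{\ast}=\mathrm{diag}(u^n,u^{-1},\ldots,u^{-1})=B_t^{-1}$; that is, the $\tau$-unitarity of $u$ translates \emph{exactly} into the matrix identity $B_t^{\ast}=B_t^{-1}$. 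Since $B_t$ and $J^{\vec\alpha}$ are both diagonal, they commute, so
\[
B_t^{\ast}J^{\vec\alpha}B_t=B_t^{-1}J^{\vec\alpha}B_t=J^{\vec\alpha},
\]
placing $B_t$ in $\SU(J^{\vec\alpha},\O_L,\tau)$.

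The only real obstacle is bookkeeping: one must interpret the originally real matrix $B_t$ (built from $e^t$) as a matrix with entries in $\O_L$, and this is precisely why we chose $t=\log|u|$ for a unitary $u\in\O_L$. That is also the entire point of the rather delicate construction of $L=F(s)$ and the special units used to produce $\tau$-unitary elements in the preceding pages. Apart from this choice of scalar, the proof is a direct diagonal-matrix calculation; no deeper properties of the bending family or of the arithmetic of $L$ are invoked at this stage.
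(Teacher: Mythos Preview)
Your proof is correct and follows essentially the same approach as the paper: both reduce to showing $B_t\in\SU(J^{\vec\alpha},\O_L,\tau)$ (the paper isolates this as Lemma~\ref{Bisunitary}) and then read off the conclusion from the amalgamated/HNN description of $\rho_t$. Your verification is slightly slicker in that you observe $B_t^\ast=B_t^{-1}$ directly and invoke commutativity of diagonal matrices, whereas the paper writes out the product $B_t^\ast J^{\vec\alpha}B_t$ explicitly; but this is a cosmetic difference, not a substantive one. One small remark: in the paper's setup $\Gamma$ is already chosen as a finite-index subgroup of $\SO(J^{\vec\alpha},\O_F)$, so no further passage to a subgroup is needed.
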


In order to prove Theorem \ref{rhotinlambda} we need a preliminary lemma. Recall that in Section \ref{bending} we defined for each $t\in \R$ the matrix
$$B_t=\begin{pmatrix}
	e^{-nt} & \\
	 & e^{t}I_{n}
\end{pmatrix}$$

\begin{lemma}\label{Bisunitary}
	If $u\in\O_L$ is unitary and $t=\log \abs{u}$
	\begin{itemize}
	\item 	$B_t\in \SU(J^{\vec\alpha},\O_L,\tau)$. 
	\item $B_t$ centralizes $\Gamma_1$. 
	\end{itemize}
 \end{lemma}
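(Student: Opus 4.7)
The plan is to verify each of the two bullets by a direct matrix computation, exploiting the diagonal block structure of $B_t$ together with the defining identity $u\tau(u)=1$ for a unitary element.

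I would begin with the second bullet because it is essentially immediate given the explicit embedding set up in Section~\ref{lattices_in_son1}. There $\Gamma_1$ sits inside $\Gamma$ as the set of block matrices $\begin{pmatrix} 1 & 0 \\ 0 & A \end{pmatrix}$ with $A \in \SO(J^{\vec\alpha_1}, \O_F)$, and $B_t = \mathrm{diag}(e^{-nt},\, e^{t} I_n)$ has matching block shape. Since both diagonal blocks of $B_t$ are scalars in the slots where the corresponding blocks of any element of $\Gamma_1$ live, the two matrices commute, hence $B_t$ centralizes $\Gamma_1$.

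For the first bullet I would check in turn: (a) that $B_t$ has entries in $\O_L$; (b) that $B_t^{\ast} J^{\vec\alpha} B_t = J^{\vec\alpha}$; and (c) that $\det B_t = 1$. For (a), the identity $u\tau(u)=1$ shows $u \in \O_L^{\times}$, and a short check (using $\tau(-u)=-\tau(u)$) shows that $|u|$ is also unitary, so after replacing $u$ by $|u|$ I may assume $u > 0$, giving $e^t = u$ and $e^{-nt} = u^{-n} = \tau(u^n)$, both clearly in $\O_L$. For (b), diagonality gives $B_t^{\ast} = \mathrm{diag}(\tau(u^{-n}), \tau(u), \ldots, \tau(u)) = \mathrm{diag}(u^n, u^{-1}, \ldots, u^{-1})$; multiplying entry by entry against $J^{\vec\alpha}=\mathrm{diag}(\alpha_1,\ldots,\alpha_n,-1)$ produces $u^n \alpha_1 u^{-n}=\alpha_1$, $u^{-1}\alpha_i u = \alpha_i$ for $2\le i\le n$, and $u^{-1}(-1)u = -1$, recovering $J^{\vec\alpha}$. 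For (c), $\det B_t = u^{-n}\cdot u^{n} = 1$.

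There is no real obstacle; the whole lemma amounts to observing that a unitary element and its $\tau$-conjugate cancel each other in the right places, and that scalar blocks commute with arbitrary block-diagonal matrices. The only piece of bookkeeping is that $t$ is defined via $\log|u|$ rather than $\log u$, which is handled by the brief remark that $|u|$ inherits unitarity from $u$.
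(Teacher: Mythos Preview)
Your proof is correct and follows essentially the same approach as the paper: replacing $u$ by $|u|$ to assume $u>0$, then computing $B_t^\ast J^{\vec\alpha}B_t$ entry by entry using $\tau(u)=u^{-1}$, and deducing the centralizing property from the block-diagonal shape. The only cosmetic difference is that the paper phrases the second bullet in terms of $B_t$ acting by scalars on $\langle e_1\rangle$ and $\ker(e_1^\ast)$ (subspaces preserved by $\Gamma_1$), which is just a restatement of your block-matrix commutation argument.
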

\begin{proof}

If $u\in \O_L$ is unitary then so is $-u$, and so without loss of generality we assume that $u>0$. Since $u$ is unitary we have
	$$B_t^\ast J^{\vec\alpha} B_t=\begin{pmatrix}
	u^{-n} & 0\\
	0 & u I_{n}	
	\end{pmatrix}
	\begin{pmatrix}
		\alpha_1 & 0\\
		0 & J^{\vec\alpha_1}
	\end{pmatrix}
	\begin{pmatrix}
		u^n & 0\\
		0 & u^{-1}I_n
	\end{pmatrix}=\begin{pmatrix}
		\alpha_1 & 0\\
		0 & J^{\vec\alpha_1}
	\end{pmatrix}=J^{\vec\alpha},$$
which proves that $B_t\in \SU(J^{\vec\alpha},\O_L,\tau).$

For the second point, let $\{e_1,\ldots,e_{n+1}\}$ be the standard basis for $\R^{n+1}$ end let $\{e_1^\ast,\ldots,e_{n+1}^\ast\}$ be the corresponding dual basis. For each $t$, $B_t$ acts trivially on the projective spaces corresponding to $\langle e_1\rangle$ and $\ker(e_1^\ast)$. By construction $\Gamma_1$ preserves both of these subspaces, and so $B_t$ centralizes $\Gamma_1$. 
\end{proof}

\begin{proof}[Proof of Theorem \ref{rhotinlambda}.]
	First, observe that $\Gamma\subset \SO(J^{\vec\alpha},\O_F)\subset \SU(J^{\vec\alpha},\O_L,\tau)$ for any $L=F(s)$. There are now two cases. If $M\bs M_1$ is separating then as describe in Section \ref{bending} $\Gamma$ splits as an amalgamated product $G_1\ast_{\Gamma_1}G_2$, and $\rho_t$ is defined by the property that $\rho_t(\gamma)=\rho_0(\gamma)$ if $\gamma\in G_1$ and $\rho_t(\gamma)=B_t\rho_0(\gamma)B_t^{-1}$ if $\gamma\in G_2$. By the previous observation $\rho_0(\gamma)\in \SU(J^{\vec\alpha},\O_L,\tau)$ for any $\gamma\in \Gamma$ and by Lemma \ref{Bisunitary} $B_t\in \SU(J^{\vec\alpha},\O_L,\tau)$. It follows that $\rho_t(\Gamma)\leq \SU(J^{\vec\alpha},\O_L,\tau)$.
	
The separating case is similar. In this case, $\Gamma=\Gamma'\ast_{s}$ is an HNN extension where $\Gamma'=\pi_1(M\bs M_{1})$ and $\rho_t$ is defined by the property that $\rho_t(\gamma)=\rho_0(\gamma)$ if $\gamma\in \Gamma'$ and $\rho_t(s)=B_t\rho_0(s)$. Using a similar argument as before it follows that $\rho_t(\Gamma)\leq \SU(J^{\vec\alpha},\O_L,\tau)$.  
	\end{proof}

\section{Certifying thinness}\label{thinness}

The goal of this section is to certify the thinness of the examples produced in the previous section. Before proceeding we recall some notation. $\Gamma$ and $\Gamma_1$ are finite index subgroups of $\SO(J^{\vec\alpha},\O_F)$ and $\SO(J^{\vec\alpha_1},\O_F)$ such that $M=\HH^n/\Gamma$ is a manifold and $M_1=\HH^{n-1}/\Gamma_1$ is an embedded totally geodesic submanifold. Furthermore, if $M$ is non-compact then all of the cusps are torus cusps and the intersection of $M_1$ with one of these cusps is connected. Let $\rho_t$ be obtained by bending $M$ along $M_1$, let $\Gamma_t=\rho_t(\Gamma)$. By Theorems \ref{bending} and \ref{balmarbending} there is a properly convex set $\Omega_t$ such that $M_t:=\Omega_t/\Gamma_t$ is a properly convex manifold that is diffeomorphic to $M$. Furthermore, if $M$ is non-compact then $M_t$ has generalized cusp ends  

The main theorem is a corollary of the following result.
\begin{proposition}\label{threesteps} Suppose that $\rho_t$ is obtained by bending $M$ along $M_1$ then
	\begin{enumerate}
	\item For every $t$, $\rho_t$ is injective,
	\item If $u \in\O_L$ is unitary and $t=\log\abs{u}$ then $\rho_t(\Gamma)$ has infinite index in $\SU(J^{\vec\alpha},\O_L,\tau)$, and 
	\item For any $t\neq 0$, $\rho_t(\Gamma)$ is Zariski dense in $\SL(n+1,\R)$ \end{enumerate}
	In particular, $\SU(J^{\vec\alpha},\O_L,\tau)$ contains a thin group isomorphic to $\pi_1M$. 
\end{proposition}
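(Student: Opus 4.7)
The three numbered claims will be proved in order; the ``in particular'' conclusion is then immediate, since (1) makes $\rho_t$ an isomorphism onto its image in $\SU(J^{\vec\alpha},\O_L,\tau)$ (by Theorem \ref{rhotinlambda}), and (2)--(3) certify that this image has infinite index and is Zariski dense, which is precisely the definition of a thin subgroup isomorphic to $\pi_1 M$. Claim (1) is essentially free from Theorem \ref{Marquisbending}: that result presents $\rho_t$ as the holonomy of a properly convex projective structure on $M$, and the end of Section \ref{convexprojgeom} records that any such holonomy is by construction an isomorphism onto its deck group, hence injective.

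For (2) I would compare virtual cohomological dimensions. If $\rho_t(\Gamma)$ had finite index in $\SU(J^{\vec\alpha},\O_L,\tau)$, it would itself be a lattice in $\SL(n+1,\R)$, and Borel--Serre would give
\[\operatorname{vcd}(\rho_t(\Gamma))\;=\;\dim\!\bigl(\SL(n+1,\R)/\SO(n+1)\bigr)-\operatorname{rank}_\Q\;\geq\;\tfrac{n(n+3)}{2}-n\;=\;\tfrac{n(n+1)}{2},\]
using the standard bound $\operatorname{rank}_\Q\leq\operatorname{rank}_\R\SL(n+1,\R)=n$. But by (1), $\rho_t(\Gamma)\cong\pi_1 M$, and $\pi_1 M$ has $\operatorname{vcd}\leq n$, since $M$ is a finite-volume $n$-dimensional hyperbolic manifold (homotopy equivalent to itself in the closed case and to an $(n-1)$-complex in the cusped case). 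For $n\geq 2$ this contradicts $n(n+1)/2>n$.

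Claim (3) is the main obstacle. Let $G_t^0$ denote the identity component of the Zariski closure of $\rho_t(\Gamma)$. Lemma \ref{limitset} guarantees that $G_t^0$ is semisimple, proximal, and irreducible on $V$; the strategy is to pin it down as $\SL(n+1,\R)$ by exhibiting two transverse copies of $\SO(n,1)$ inside $G_t^0$ and then invoking irreducibility of the isotropy representation of the symmetric pair $(\SL(n+1,\R),\SO(n,1))$. In the separating case $\rho_t(G_1)=\rho_0(G_1)$ is the fundamental group of a hyperbolic $n$-manifold with totally geodesic boundary, and therefore a non-elementary subgroup of $\SO(n,1)$ preserving no proper totally geodesic subspace of $\HH^n$; its Zariski closure is $\SO(n,1)$ itself, so $\SO(n,1)\subseteq G_t^0$. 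The same argument applied to $\rho_t(G_2)=B_t\rho_0(G_2)B_t^{-1}$ gives $B_t\SO(n,1)B_t^{-1}\subseteq G_t^0$. In the non-separating case one uses $\rho_t(s)=B_t\rho_0(s)$ instead of $B_t$ to produce the second conjugated copy.

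To finish, decompose $\sl(n+1,\R)=\so(n,1)\oplus\p$ via the involution $\theta(X)=-J_n X^T J_n$. The complement $\p$ is the isotropy module of this affine symmetric pair and is irreducible for $\SO(n,1)$, which one checks by identifying $\p_\C$ with the irreducible traceless-symmetric-tensors representation of $\so(n+1,\C)$. A direct calculation with $B_t=\operatorname{diag}(e^{-nt},e^t,\ldots,e^t)$ shows that $B_t^T J_n B_t$ is not a scalar multiple of $J_n$ when $t\neq 0$, so $B_t\notin N_{\SL(n+1,\R)}(\SO(n,1))$, and hence the projection $W$ of $\Ad_{B_t}\so(n,1)$ onto $\p$ is a nonzero subspace. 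Since $\operatorname{Lie}(G_t^0)$ is $\operatorname{Ad}(\SO(n,1))$-invariant (being the Lie algebra of a subgroup containing $\SO(n,1)$) and contains $\Ad_{B_t}\so(n,1)$, it also contains the nonzero $\SO(n,1)$-submodule $\sum_{g\in\SO(n,1)}\Ad_g W\subseteq\p$, which by irreducibility equals all of $\p$. Thus $\operatorname{Lie}(G_t^0)\supseteq\so(n,1)+\p=\sl(n+1,\R)$, forcing $G_t^0=\SL(n+1,\R)$ and completing (3).
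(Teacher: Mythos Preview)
Your argument is correct, and for parts (2) and (3) it takes a genuinely different route from the paper's.

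For (1), you do exactly what the paper does: invoke Theorem~\ref{Marquisbending} and the observation that holonomies of properly convex structures are injective.

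For (2), the paper argues via Kazhdan's property~(T): since $M$ contains an embedded totally geodesic hypersurface, $\Gamma$ virtually surjects onto $\Z$ (Millson), whereas any finite-index subgroup of the high-rank lattice $\SU(J^{\vec\alpha},\O_L,\tau)$ has finite abelianization.  Your virtual-cohomological-dimension comparison via Borel--Serre is an equally clean alternative; both approaches rely on standard structural facts about arithmetic lattices, and neither is obviously more elementary than the other.

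For (3), the contrast is sharper.  The paper splits into two cases.  In the cocompact case it invokes Benoist's theory of divisible convex sets \cite{BenoistCDI}, Koecher's classification of symmetric cones, and Mostow rigidity to rule out $\Gamma_t$ landing in a conjugate of $\SO(n,1)$.  In the non-cocompact case it runs a separate argument (Proposition~\ref{noncptZariskidense}) using the existence of a type~1 cusp, the orbit structure of $\overline{P_1}$, and Benoist's transitivity criterion (Theorems~\ref{trans} and~\ref{nosmpylectic}).  Your approach is uniform and considerably more direct: you bypass the convex-projective machinery entirely by observing that the Zariski closure of each vertex group of the splitting is already the full $\SO(n,1)$ (since it contains the lattice $\Lambda\subset\SO(n-1,1)$ together with an element moving $\HH^{n-1}$ off itself), so $G_t^0$ contains both $\SO_0(n,1)$ and its $B_t$-conjugate.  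The irreducibility of the isotropy module $\p\cong\operatorname{Sym}^2_0(\R^{n+1})$ for the symmetric pair $(\SL(n+1,\R),\SO(n,1))$ then forces $\operatorname{Lie}(G_t^0)=\sl(n+1,\R)$.  This avoids Benoist's results, the cusp classification, and the case distinction altogether; the price is that you need the (standard) fact that $\p$ is absolutely irreducible, which you correctly identify with the irreducibility of traceless symmetric tensors under $\so(n+1,\C)$.  The paper's route, while heavier, situates the result within the broader theory of convex divisible domains and illustrates how the type~1 cusp governs Zariski density.
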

\begin{proof}
	The first two points are simple. For (1) observe that by Theorem \ref{bending}, $\rho_t$ is the holonomy of a convex projective structure on $M$. 
	
	Let $\Gamma_t=\rho_t(\Gamma)$. For (2), we can use the fact that the manifold $\HH^n/\Gamma$ contains an embedded hypersurface, as
	we observed earlier. It follows from \cite{MillsonBetti} that the group $\Gamma$ virtually surjects \red{onto} $\Z$ \red{and thus has infinite abelianization}. \red{More precisely, one can pass to a finite cover, $M'$, of $\HH^n/\Gamma$ that contains an embedded non-separating hypersurface, $\Sigma'$. There is a non-trivial cohomology class in $H^1(M',\Z)$ that is Poincar\'e dual to $\Sigma'$, which gives the virtual surjection onto $\Z$.} Since  $\SU(J^{\vec\alpha},\O_L,\tau)$ is a lattice in a high rank Lie group, it follows that it has property (T) (see \cite[Prop.\ 13.4.1]{WitteMorris}). Furthermore, any finite index subgroup  of $\SU(J^{\vec{\alpha}},\O_L,\tau)$ will also have  property (T) and thus will have finite abelianization (see \cite[Cor.\ 13.1.5]{WitteMorris}). Since the groups $\Gamma_t$ are all abstractly isomorphic it follows that $\Gamma_t$ is not a lattice, this implies $(2)$.

	The third point breaks into two cases depending on whether or not $\Gamma$ is a cocompact lattice in $\SO(n,1)$. We treat the cocompact case first. By Theorem \ref{Marquisbending}, it follows that $\Gamma_t$ acts cocompactly on a properly convex set $\Omega_t$. Since $\Gamma$ is a cocompact lattice in $\SO(n,1)$, the group $\Gamma$ is word hyperbolic and it follows from work of Benoist \cite{BenoistCDI} that for each $t$ the domain $\Omega_t$ is strictly convex. Hence $\Omega_t$ cannot be written as a non-trivial product of properly convex sets. Applying \cite[Thm 1.1]{BenoistCDI} it follows that $\Gamma_t$ is either Zariski dense or $\Omega_t$ is the projectivization of an irreducible symmetric convex cone. Suppose we are in the latter case. Irreducible symmetric convex cones were classified by Koecher (see \cite[Fact 1.3]{BenoistCDIII} for a precise statement) and since $\Omega_t$ is strictly convex it follows that $\Omega_t\cong \HH^n$. It follows that $\Gamma_t$ is conjugate to a lattice in $\SO(n,1)$, which by Mostow rigidity must be $\Gamma$. However, bending in this context never produces conjugate representations, since
	any such conjugacy would centralize the subgroup corresponding to the  complement of the bending hypersurface. However this subgroup is nonelementary
	and this is a contradiction. Therefore, $\Gamma_t$ is Zariski dense if $t\neq 0$, which concludes the cocompact case. 
	
	The non-cocompact case is an immediate corollary of the following Proposition whose proof occupies the remainder of this section.
\end{proof}

\begin{proposition}\label{noncptZariskidense}
	If $M$ is non-compact, $\rho_t$ is obtained by bending $M$ along $M_1$, and $\Gamma_t=\rho_t(\Gamma)$ then $\Gamma_t$ is Zariski dense. \end{proposition}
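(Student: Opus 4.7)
I would show that the identity component $G_t^0$ of the Zariski closure of $\Gamma_t$ equals $\SL(n+1,\R)$. By Lemma \ref{limitset}, $G_t^0$ is semisimple, proximal, acts irreducibly on $V$, and its limit set $\Lambda_{G_t^0}$ is a single $G_t^0$-orbit. The key input is the bending cusp structure: because $M_1$ meets a cusp of $M$ in a connected totally geodesic submanifold, Theorem \ref{balmarbending} (together with its proof in \cite{BalMar}) yields that for $t\neq 0$ the corresponding cusp of $M_t$ is a generalized torus cusp of \emph{type 1}. Its fundamental group $\Delta_t\subset \Gamma_t$, after conjugation into the standard model of Section \ref{gencusps}, is a rank-$(n-1)$ lattice in the abelian group $P_1$, so its Zariski closure is the $n$-dimensional group $\overline{P_1}$ of \eqref{p1zclosure}. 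Hence $G_t$ contains a conjugate of $\overline{P_1}$.

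Next I would show $\Lambda_{G_t^0}=\RP^n$ via Lemma \ref{p1barorbits}. It suffices to exhibit one point $x\in\Lambda_{G_t^0}$ lying outside $\ker(e_2^{\ast})\cup\ker(e_{n+1}^{\ast})$. I would take $x$ to be the attracting fixed point of a generic proximal element $\gamma\in\rho_t(G_1)=G_1\subset\SO(J_n)$ coming from the ``un-bent'' factor (i.e., the side of the bending not conjugated by $B_t$). The group $G_1$ is non-elementary and hence Zariski dense in $\SO(J_n)$, so in the projective coordinates that identify the type-1 cusp of $\Omega_t$ with the standard model $\Omega_1$, a generic attracting fixed point of such $\gamma$ on $\partial\HH^n\subset \RP^n$ avoids the two exceptional hyperplanes. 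Lemma \ref{p1barorbits} then gives that $\overline{P_1}\cdot x$ is open in $\RP^n$, so $G_t^0\cdot x=\Lambda_{G_t^0}$ is open. Since $G_t^0\cdot x$ is also closed and $\RP^n$ is connected, $\Lambda_{G_t^0}=\RP^n$.

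Finally I would deduce $G_t^0=\SL(n+1,\R)$ using the maximality of $\so(n,1)$ in $\sl(n+1,\R)$. Since $G_1\subset G_t^0$ is Zariski dense in $\SO(J_n)$, the identity component $\SO(J_n)^+$ is contained in $G_t^0$. The complement of $\so(n,1)$ inside $\sl(n+1,\R)$ is the space of trace-free $J_n$-symmetric matrices, which is irreducible as an $\so(n,1)$-module (the traceless symmetric tensor representation, irreducible for $n\geq 2$). Consequently any Lie subalgebra of $\sl(n+1,\R)$ containing $\so(n,1)$ is either $\so(n,1)$ or all of $\sl(n+1,\R)$. But $\overline{P_1}$ is not contained in $\SO(J_n)$ even up to conjugation: the element $D_u:=\mathrm{diag}(1,u,1,\ldots,1)\in\overline{P_1}$ has eigenvalue $u$ of multiplicity one and eigenvalue $1$ of multiplicity $n$, which for $u\neq\pm 1$ is incompatible with the reciprocal-pair eigenvalue structure forced by preservation of a signature-$(n,1)$ form. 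Hence $G_t^0\supsetneq \SO(J_n)^+$, and maximality forces $G_t^0=\SL(n+1,\R)$. I expect the main obstacle to be the genericity claim in the second paragraph: one must translate between the Klein coordinates for $\partial \HH^n$, where the proximal fixed points of $G_1$ naturally sit, and the coordinates adapted to $\Omega_1$, where the exceptional hyperplanes $\ker(e_2^{\ast})$ and $\ker(e_{n+1}^{\ast})$ are defined, and certify that some proximal fixed point lies outside both.
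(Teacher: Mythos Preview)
Your first two paragraphs closely follow the paper's argument: invoke Lemma \ref{limitset} to get that $G_t^0$ is semisimple, proximal, and irreducible with $\Lambda_{G_t^0}$ a single orbit; use the type-1 cusp (which the paper extracts from \cite[Thm.\ 6.1]{BalMar} via the connected-intersection hypothesis) to place a conjugate of $\overline{P_1}$ inside $G_t^0$; then use Lemma \ref{p1barorbits} to make $\Lambda_{G_t^0}$ open, hence all of $\RP^n$. The ``main obstacle'' you flag is handled more cheaply in the paper: rather than locating a specific proximal fixed point of $G_1$ and translating coordinates, the paper just notes that irreducibility of $G_t^0$ on $V$ forces $\Lambda_{G_t^0}\not\subset\ker(e_2^\ast)\cup\ker(e_{n+1}^\ast)$. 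Indeed $\Lambda_{G_t^0}$ is an orbit of the connected group $G_t^0$, so the linear form $e_2^\ast$ (resp.\ $e_{n+1}^\ast$) either vanishes identically on it or on a nowhere-dense subset; if both vanished identically, the span of $\Lambda_{G_t^0}$ would be proper, contradicting irreducibility.

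Your final paragraph is a genuinely different endgame. The paper finishes with Benoist's results: Theorem \ref{trans} says a connected semisimple proximal irreducible subgroup acting transitively on $\RP^n$ is either $\SL(n+1,\R)$ or ${\rm Sp}(2m,\R)$, and Theorem \ref{nosmpylectic} rules out the symplectic case because $\Gamma_t$ preserves a properly convex open set. You instead argue that $\so(n,1)$ is maximal in $\sl(n+1,\R)$ and exhibit an element of $G_t^0$ (a conjugate of $D_u$ from the type-1 cusp group) whose eigenvalue pattern is incompatible with preserving any nondegenerate bilinear form. This is correct and more elementary, but note that it renders your transitivity argument redundant: once $\SO(J_n)^+\subset G_t^0$ and $G_t^0\neq\SO(J_n)^+$, maximality already gives $G_t^0=\SL(n+1,\R)$ with no appeal to $\Lambda_{G_t^0}=\RP^n$. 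The price is that you must know $G_1$ is Zariski dense in $\SO(J_n)$, which the paper's route avoids. On that point there is a small gap: ``non-elementary'' alone does not imply Zariski density in $\SO(J_n)$; you also need that $G_1$ preserves no proper totally geodesic subspace. This follows because $G_1$ strictly contains the hypersurface group $\Gamma_1$, a lattice in $\SO(J_{n-1})$, so its limit set strictly contains a round $(n-2)$-sphere and hence lies in no proper round subsphere.
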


The strategy for proving Proposition \ref{noncptZariskidense} is to apply the following two results from \cite{BenoistCones}.

\begin{theorem}[Lem.\ 3.9 of \cite{BenoistCones}]\label{trans}
	Suppose that $G\subset \SL(V)$ is a connected, semisimple, proximal Lie subgroup acting irreducibly on $V$. If $G$ acts transitively on $P(V)$ then either $V=\R^{n}$ and $G=\SL(n,\R)$ or $V=\R^{2n}$ and $G={\rm Sp}(2n,\R)$. 
\end{theorem}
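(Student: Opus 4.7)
The plan is to determine the Zariski closure $G_t = \overline{\Gamma_t}^{\mathrm{Zar}}$ by applying Theorem \ref{trans} to its identity component $G^0_t$. By Lemma \ref{limitset}, $G^0_t$ is already semisimple, proximal, and acts irreducibly on $V = \R^{n+1}$, so the task reduces to upgrading this to transitivity of $G^0_t$ on $P(V)$; Theorem \ref{trans} will then force $G^0_t$ to equal $\SL(V)$ or a conjugate of ${\rm Sp}(n+1,\R)$, and the symplectic case will be ruled out directly.

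The noncompactness of $M$ provides the crucial input. The construction in Section \ref{construction} was arranged so that $M_1$ meets at least one cusp of $M$ in a connected totally geodesic cross-section, and after bending this cusp becomes a generalized cusp of type 1 in $M_t$ by Theorem \ref{balmarbending} (the cusps $M_1$ misses remain type 0). Its fundamental group $\Lambda \subset \Gamma_t$ is a lattice in a conjugate of $P_1 \cong \R^{n-1}$, whose Zariski closure in $\SL(V)$ is the corresponding conjugate of $\overline{P_1}$ from \eqref{p1zclosure}. Hence, after a suitable fixed conjugation, $\overline{P_1}^0 \subset G^0_t$. By Lemma \ref{p1barorbits}, $\overline{P_1}^0$ has an open orbit at every point of $P(V)$ off the codimension-one set $\ker(e_2^\ast)\cup\ker(e_{n+1}^\ast)$, and hence so does $G^0_t$. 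I would then invoke the companion result from \cite{BenoistCones} alluded to in the excerpt, namely the statement that a connected semisimple proximal subgroup of $\SL(V)$ acting irreducibly and possessing an open orbit on $P(V)$ must act transitively on $P(V)$. Theorem \ref{trans} then identifies $G^0_t$ with $\SL(V)$ or with a conjugate of ${\rm Sp}(n+1,\R)$.

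The symplectic case is vacuous when $n+1$ is odd. When $n+1$ is even, observe that $\overline{P_1}^0$ contains the diagonal one-parameter family $\mathrm{diag}(1,u,1,\ldots,1)$ for arbitrary $u > 0$; the resulting eigenvalue spectrum $\{1^{(n)},u\}$ cannot be partitioned into $(\lambda,\lambda^{-1})$ pairs unless $u = 1$, so no conjugate of ${\rm Sp}(n+1,\R)$ contains such an element. This contradicts $\overline{P_1}^0 \subset G^0_t$ in the symplectic case, so $G^0_t = \SL(V)$; since $\SL(V)$ is connected we conclude $G_t = \SL(V)$, proving Zariski density of $\Gamma_t$. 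The main obstacle in this plan is the transitivity step: Lemma \ref{p1barorbits} alone only produces an open $G^0_t$-orbit, and one needs the second Benoist result (together with the semisimple-proximal-irreducible structure given by Lemma \ref{limitset}) to rule out the coexistence of the open orbit with a lower-dimensional closed $G^0_t$-orbit in $P(V)$.
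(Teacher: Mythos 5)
The statement you were assigned is Theorem \ref{trans}, a result the paper quotes from \cite{BenoistCones} without proof; your write-up is not a proof of that theorem but a proof of Proposition \ref{noncptZariskidense}, the result that \emph{applies} it. Read as such, your outline follows the paper closely until the transitivity step, where there is a genuine gap.

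You obtain an open $G^0_t$-orbit at a generic point off $\ker(e_2^\ast)\cup\ker(e_{n+1}^\ast)$ and then appeal to a ``companion result'' asserting that a connected, semisimple, proximal, irreducible subgroup of $\SL(V)$ with an open orbit on $P(V)$ must be transitive on $P(V)$. No such result is available, and the assertion is false: the identity component of $\SO(p,q)$ (with $p,q\geq 1$, $p+q\geq 3$) acting on $\R^{p+q}$ is connected, semisimple, proximal, and irreducible, and it has two open orbits on projective space (the positive and negative lines for the quadratic form) while also preserving the closed null quadric, so it is not transitive. Your final sentence --- ``rule out the coexistence of the open orbit with a lower-dimensional closed $G^0_t$-orbit'' --- correctly identifies the problem, but abstract structure theory will not rule it out. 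The paper closes the gap with no extra citation by choosing $x$ \emph{inside the limit set}: the second bullet of Lemma \ref{limitset} gives $G^0_t\cdot x = \Lambda_{G^0_t}$ for any $x\in\Lambda_{G^0_t}$, so that orbit is automatically \emph{closed}. Irreducibility (together with connectedness of $\Lambda_{G^0_t}$, as a single $G^0_t$-orbit) prevents $\Lambda_{G^0_t}$ from lying inside $\ker(e_2^\ast)\cup\ker(e_{n+1}^\ast)$, so some $x\in\Lambda_{G^0_t}$ has $\overline{P_1}\cdot x$ open by Lemma \ref{p1barorbits}; then $G^0_t\cdot x$ is simultaneously open and closed in the connected space $P(V)$, hence equals $P(V)$.

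One part of your proposal does work and is arguably nicer than the paper's: the exclusion of the symplectic case. The element $\mathrm{diag}(1,u,1,\ldots,1)$ of $\overline{P_1}^0$, and any conjugate of it, has eigenvalue multiset $\{1^{(n)},u\}$, which for $u\neq 1$ is not closed under $\lambda\mapsto\lambda^{-1}$ and therefore lies in no conjugate of ${\rm Sp}(n+1,\R)$. The paper instead invokes Theorem \ref{nosmpylectic}; your spectral argument is more elementary and self-contained.
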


The next Theorem allows us to rule out the second possibility in our case of interest. 

\begin{theorem}[Cor.\ 3.5 of \cite{BenoistCones}]\label{nosmpylectic}
	If $\Gamma\subset \SL(V)$ acts strongly irreducibly on $V$ and preserves an open properly convex subset then $\Gamma$ does not preserve a symplectic form. 
\end{theorem}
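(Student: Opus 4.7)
The plan is to show that the Zariski closure $G_t$ of $\Gamma_t$ equals $\SL(V)$ by classifying its identity component $G_t^0$ via Benoist's Theorems \ref{trans} and \ref{nosmpylectic}. By Lemma \ref{limitset}, $G_t^0$ is already known to be connected, semisimple, proximal, and irreducible on $V$, with limit set a single orbit $\Lambda_{G_t^0}=G_t^0\cdot x$. Once I establish that $G_t^0$ acts transitively on $P(V)$, Theorem \ref{trans} narrows the possibilities to $G_t^0=\SL(V)$ or $G_t^0={\rm Sp}(V,\omega)$ for some symplectic form $\omega$, and the symplectic case will be ruled out using Theorem \ref{nosmpylectic}.

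The key input for transitivity is the non-compactness of $M$: the construction in Section \ref{construction} ensures that some cusp of $M$ meets $M_1$ in a connected subset, and for $t\neq 0$ the bending matrix $B_t$ is semisimple, so it introduces a genuine scaling direction into the cusp group. Consequently, by Theorem \ref{balmarbending} together with the BCL classification of generalized cusps, the corresponding end of $M_t$ is a generalized cusp of type 1, and therefore after conjugation $\Gamma_t$ contains a rank $(n-1)$ lattice $\Lambda\subset P_1$. The transcendence of the exponential coordinate in $P_1$ prevents any proper algebraic subgroup of $\overline{P_1}$ from containing such a lattice, so $\Lambda$ is Zariski dense in $\overline{P_1}$; since $\overline{P_1}$ is connected, $\overline{P_1}\subset G_t^0$.

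I now argue transitivity. The limit set $\Lambda_{G_t^0}$ is $G_t^0$-invariant, hence $\overline{P_1}$-invariant, and it is connected since it is a single $G_t^0$-orbit. If $\Lambda_{G_t^0}$ were contained in $\ker(e_2^\ast)\cup\ker(e_{n+1}^\ast)$, connectedness would force it into one of these two projective hyperplanes, making its $\R$-linear span a proper $G_t^0$-invariant subspace of $V$ and contradicting irreducibility. So there exists $x\in\Lambda_{G_t^0}$ with $x\notin\ker(e_2^\ast)\cup\ker(e_{n+1}^\ast)$; by Lemma \ref{p1barorbits}, $\overline{P_1}\cdot x$ is open in $P(V)$ and is contained in $G_t^0\cdot x=\Lambda_{G_t^0}$. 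Thus the closed orbit $\Lambda_{G_t^0}$ has non-empty interior, is therefore itself open, and being clopen in the connected space $P(V)$ must equal all of $P(V)$.

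Finally, if $G_t^0={\rm Sp}(V,\omega)$ held, then the finite-index subgroup $\Gamma_t\cap G_t^0\subset\Gamma_t$ would preserve $\omega$; it is strongly irreducible as a finite-index subgroup of $\Gamma_t$ (which is strongly irreducible by Theorem \ref{gencupsstrirred}), and it still preserves the open properly convex set $\Omega_t$, directly contradicting Theorem \ref{nosmpylectic}. Hence $G_t^0=\SL(V)$, so $G_t=\SL(V)$ and $\Gamma_t$ is Zariski dense. The main obstacle I anticipate is pinning down precisely why the bent cusp is type 1 rather than type 0, and verifying that the cusp lattice $\Lambda$ is Zariski dense in all of $\overline{P_1}$; the latter should follow from the fact that the exponential entry in $P_1$ is transcendental, so a rank $(n-1)$ lattice of $P_1$ cannot be contained in any proper algebraic subgroup of $\overline{P_1}$.
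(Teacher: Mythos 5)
Your proposal does not prove the statement you were given. The statement is Theorem~\ref{nosmpylectic}: a subgroup $\Gamma\subset\SL(V)$ acting strongly irreducibly on $V$ and preserving an open properly convex set cannot preserve a symplectic form. That is a result quoted from Benoist \cite{BenoistCones} and the paper offers no proof of it. What you have written is instead an argument for Proposition~\ref{noncptZariskidense}, namely the Zariski density of the bent group $\Gamma_t$ in the non-compact case. Indeed, your final paragraph \emph{invokes} Theorem~\ref{nosmpylectic} as a black box to kill the symplectic alternative coming out of Theorem~\ref{trans}; so read as a proof of \ref{nosmpylectic} it is circular. A genuine proof of the target would have to engage with the interplay between an invariant symplectic form, properness of the convex cone, and (strong) irreducibility --- e.g.\ the tension between proximality/duality of the cone $\Omega\leftrightarrow\Omega^\ast$ and the self-duality forced by a $\Gamma$-invariant symplectic pairing --- and none of these ingredients appear in your write-up.

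Setting aside the mismatch: as a sketch of Proposition~\ref{noncptZariskidense} your argument tracks the paper's closely (Lemma~\ref{limitset} for the hypotheses of Theorem~\ref{trans}, the type-1 cusp forcing $\overline{P_1}\subset G^0_t$, Lemma~\ref{p1barorbits} plus closedness of $\Lambda_{G^0_t}$ to get transitivity, then Theorems~\ref{nosmpylectic} and~\ref{trans}). Two small points where you diverge: the paper obtains the type-1 cusp from \cite[Thm.~6.1]{BalMar}, not from Theorem~\ref{balmarbending} alone (which only says type~$0$ or~$1$); and your ``transcendence of the exponential'' justification that a lattice $\Lambda\subset P_1$ is Zariski dense in $\overline{P_1}$ is loose --- the clean argument is that the Zariski closure of $\Lambda$ is an algebraic subgroup of $\overline{P_1}$ that must contain an element with nontrivial semisimple part (since the $u$-coordinate surjects a lattice onto a lattice in $\R$), and the Zariski closure of the cyclic group that element generates already fills in the missing one-parameter torus, forcing the closure to be all of $\overline{P_1}$. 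But none of this repairs the basic problem: you have proved (modulo these quibbles) a consequence of Theorem~\ref{nosmpylectic}, not the theorem itself.
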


\begin{proof}[Proof of Proposition \ref{noncptZariskidense}.]
	Let $G_t$ be the Zariski closure of $\Gamma_t$ and let $G^0_t$ be the identity component of $G_t$. We now show that $G^0_t=\SL(n+1,\R)$. By applying Lemma \ref{limitset} we see that $G^0_t$ satisfies all of the hypotheses of Theorem \ref{trans} except for transitivity. 
	
	Since the intersection of $M_1$ with one of the cusps of $M$ is connected we can apply \cite[Thm.\ 6.1]{BalMar} to conclude that $M_t$ has at least one type 1 cusp. It follows that (after possibly conjugating) $G^0_t$ contains the Zariski closure of $P_1$. Since $\Gamma_t$ acts irreducibly on $V$ it is not the case that $\Lambda_{G^0_t}$ is contained in $\ker(e_2^\ast)\cup \ker(e_{n+1}^\ast)$, and \red{so by Lemma \ref{p1barorbits}} we can choose a point $x\in \Lambda_{G^0_t}$ such that $\overline{P_1}\cdot x$ is open in $P(V)$. It follows that $G^0_t\cdot x$ has non-empty interior and is hence open. Finally, by Lemma \ref{limitset}, $G^0_t\cdot x=\Lambda_{G^0_t}$, which is closed, hence $G^0_t$ acts transitively on $P(V)$. 
	
	Finally, by Theorem \ref{nosmpylectic}, $\Gamma_t$ does not preserve a symplectic form and hence neither does $G^0_t$. Applying Theorem \ref{trans} it follows that $G^0_t=\SL(n+1,\R)$.   
\end{proof}

We can now prove the Theorem \ref{mainthm}.

\begin{proof}[Proof of Theorem \ref{mainthm}.]
	Since $\Gamma$ is an arithmetic group of orthogonal type in $\SO(n,1)$ there is a totally real number field $F$ with ring of integers $\O_F$ as well as $\vec\alpha=(\alpha_1,\ldots,\alpha_n)$ such that $\Gamma$ is commensurable with $\SO(J^{\vec\alpha},\O_F)$. The group $\Gamma$ is cocompact if and only if $F\neq \Q$.  
	
	By using standard separability arguments, we can pass to a finite index subgroup $\Gamma'$ such that $M=\HH^n/\Gamma'$ contains an embedded totally geodesic hypersurface $M_1$ with the property that if $M$ is non-compact it has only torus cusps and such that $M_1$ has connected intersection with at least one of the cusps. 
	
	Let $\rho_t$ be obtained by bending $M$ along $M_1$. Let $v\in \O_F^\times$ be an element guaranteed by Lemma \ref{specialunits} and let $L=F(s)$, where $s$ is a root of $p_v(x)$, and let $\tau$ be the non-trivial Galois automorphism of $L$ over $F$. Next, let $u=s^n$ be a  $\tau$-unit in $\O_F^\times$.  If $t=\log\abs{u}$ then by Theorem \ref{rhotinlambda} it follows that $\rho_t(\Gamma')\subset \SU(J^{\vec\alpha},\O_L,\tau)$. Furthermore, by Theorem \ref{threesteps}, $\rho_t(\Gamma')$ is a thin subgroup of $\SU(J^{\vec\alpha},\O_L,\tau)$. Again, $\SU(J^{\vec\alpha},\O_L,\tau)$ is cocompact if and only if $F\neq \Q$ and by varying $v$ and $\vec{\alpha}$ it is possible to produce infinitely many non-commensurable lattices.   
\end{proof}

\bibliographystyle{plain}
\bibliography{bibliography}

\begin{thebibliography}{10}

\bibitem{AbMarSo}
H.~Abels, G.~A. Margulis, and G.~A. So\u{\i}fer.
\newblock Semigroups containing proximal linear maps.
\newblock {\em Israel J. Math.}, 91(1-3):1--30, 1995.

\bibitem{BCL}
S.~A. {Ballas}, D.~{Cooper}, and A.~{Leitner}.
\newblock {Generalized Cusps in Real Projective Manifolds: Classification}.
\newblock {\em ArXiv e-prints}, October 2017.

\bibitem{BalMar}
S.~A. {Ballas} and L.~{Marquis}.
\newblock {Properly convex bending of hyperbolic manifolds}.
\newblock {\em ArXiv e-prints}, September 2016.

\bibitem{BallasLongThin}
Samuel Ballas and Darren~D. Long.
\newblock Constructing thin subgroups commensurable with the figure-eight knot
  group.
\newblock {\em Algebr. Geom. Topol.}, 15(5):3011--3024, 2015.

\bibitem{BenoistCones}
Yves Benoist.
\newblock Automorphismes des c\^ones convexes.
\newblock {\em Invent. Math.}, 141(1):149--193, 2000.

\bibitem{BenoistCDI}
Yves Benoist.
\newblock Convexes divisibles. {I}.
\newblock In {\em Algebraic groups and arithmetic}, pages 339--374. Tata Inst.
  Fund. Res., Mumbai, 2004.

\bibitem{BenoistCDIII}
Yves Benoist.
\newblock Convexes divisibles. {III}.
\newblock {\em Ann. Sci. \'Ecole Norm. Sup. (4)}, 38(5):793--832, 2005.

\bibitem{Bergeron}
Nicolas Bergeron.
\newblock Premier nombre de {B}etti et spectre du laplacien de certaines
  vari\'et\'es hyperboliques.
\newblock {\em Enseign. Math. (2)}, 46(1-2):109--137, 2000.

\bibitem{BourGambSarnAffineSeive}
Jean Bourgain, Alex Gamburd, and Peter Sarnak.
\newblock Affine linear sieve, expanders, and sum-product.
\newblock {\em Invent. Math.}, 179(3):559--644, 2010.

\bibitem{CooperFuterThin}
D.~{Cooper} and D.~{Futer}.
\newblock {Ubiquitous quasi-Fuchsian surfaces in cusped hyperbolic
  3-manifolds}.
\newblock {\em ArXiv e-prints}, May 2017.

\bibitem{CoLoTi}
D.~Cooper, D.~D. Long, and S.~Tillmann.
\newblock On convex projective manifolds and cusps.
\newblock {\em Adv. Math.}, 277:181--251, 2015.

\bibitem{FuchsThin}
Elena Fuchs.
\newblock The ubiquity of thin groups.
\newblock In {\em Thin groups and superstrong approximation}, volume~61 of {\em
  Math. Sci. Res. Inst. Publ.}, pages 73--92. Cambridge Univ. Press, Cambridge,
  2014.

\bibitem{FuchsMeiriSarnakThin}
Elena Fuchs, Chen Meiri, and Peter Sarnak.
\newblock Hyperbolic monodromy groups for the hypergeometric equation and
  {C}artan involutions.
\newblock {\em J. Eur. Math. Soc. (JEMS)}, 16(8):1617--1671, 2014.

\bibitem{FuchsRivinThin}
Elena Fuchs and Igor Rivin.
\newblock Generic thinness in finitely generated subgroups of {${\rm SL}_n(\Bbb
  Z)$}.
\newblock {\em Int. Math. Res. Not. IMRN}, (17):5385--5414, 2017.

\bibitem{GoldGuiv}
I.~Ya. Goldsheid and Y.~Guivarc'h.
\newblock Zariski closure and the dimension of the {G}aussian law of the
  product of random matrices. {I}.
\newblock {\em Probab. Theory Related Fields}, 105(1):109--142, 1996.

\bibitem{KahnLabourieMozes}
Jeremy Kahn, Francois Labourie, and Shahar Mozes.
\newblock Surface subgroups in uniform lattices of some semi-simple lie groups.
\newblock {\em in preparation}, 2018.

\bibitem{KahnMark}
Jeremy Kahn and Vladimir Markovic.
\newblock Immersing almost geodesic surfaces in a closed hyperbolic three
  manifold.
\newblock {\em Ann. of Math. (2)}, 175(3):1127--1190, 2012.

\bibitem{KahnWright}
Jeremy {Kahn} and Alex {Wright}.
\newblock {Nearly Fuchsian surface subgroups of finite covolume Kleinian
  groups}.
\newblock {\em arXiv e-prints}, page arXiv:1809.07211, Sep 2018.

\bibitem{Ko}
J.-L. Koszul.
\newblock D\'eformations de connexions localement plates.
\newblock {\em Ann. Inst. Fourier (Grenoble)}, 18(fasc. 1):103--114, 1968.

\bibitem{LiMillsonArithmetic}
Jian-Shu Li and John~J. Millson.
\newblock On the first {B}etti number of a hyperbolic manifold with an
  arithmetic fundamental group.
\newblock {\em Duke Math. J.}, 71(2):365--401, 1993.

\bibitem{LongReidThinII}
D.~D. Long and A.~W. Reid.
\newblock Constructing thin groups.
\newblock In {\em Thin groups and superstrong approximation}, volume~61 of {\em
  Math. Sci. Res. Inst. Publ.}, pages 151--166. Cambridge Univ. Press,
  Cambridge, 2014.

\bibitem{LongReidThinI}
D.~D. Long and A.~W. Reid.
\newblock Thin surface subgroups in cocompact lattices in
  {$\operatorname{SL}(3,\bold{R})$}.
\newblock {\em Illinois J. Math.}, 60(1):39--53, 2016.

\bibitem{LongReidThinIII}
D.~Darren Long and Alan~W. Reid.
\newblock Constructing thin subgroups in {$\rm SL(4,R)$}.
\newblock {\em Int. Math. Res. Not. IMRN}, (7):2006--2016, 2014.

\bibitem{MarquisBending}
Ludovic Marquis.
\newblock Exemples de vari\'et\'es projectives strictement convexes de volume
  fini en dimension quelconque.
\newblock {\em Enseign. Math. (2)}, 58(1-2):3--47, 2012.

\bibitem{MillsonBetti}
John~J. Millson.
\newblock On the first {B}etti number of a constant negatively curved manifold.
\newblock {\em Ann. of Math. (2)}, 104(2):235--247, 1976.

\bibitem{SarnakThinGroups}
Peter Sarnak.
\newblock Notes on thin matrix groups.
\newblock In {\em Thin groups and superstrong approximation}, volume~61 of {\em
  Math. Sci. Res. Inst. Publ.}, pages 343--362. Cambridge Univ. Press,
  Cambridge, 2014.

\bibitem{Vey}
Jacques Vey.
\newblock Sur les automorphismes affines des ouverts convexes saillants.
\newblock {\em Ann. Scuola Norm. Sup. Pisa (3)}, 24:641--665, 1970.

\bibitem{WitteMorris}
D.~{Witte Morris}.
\newblock {Introduction to Arithmetic Groups}.
\newblock {\em ArXiv Mathematics e-prints}, June 2001.

\end{thebibliography}

\end{document}